\theoremstyle{plain}
\newtheorem{theorem}{Theorem}[section]
\newtheorem{proposition}[theorem]{Proposition}
\newtheorem{lemma}[theorem]{Lemma}
\newtheorem{corollary}[theorem]{Corollary}
\newtheorem*{proposition*}{Proposition}
\theoremstyle{definition}
\newtheorem{definition}[theorem]{Definition}
\newtheorem{example}[theorem]{Example}
\theoremstyle{remark}
\newtheorem{remark}[theorem]{Remark}
\newcommand{\defref}[1]{Definition~\ref{#1}}
\newcommand{\be}{\begin{enumerate}}
\newcommand{\ee}{\end{enumerate}}
\newcommand{\Z}{\mathbb{Z}}
\newcommand{\lk}{\mathrm{lk}}
\begin{document}

\title{The Digital Hopf Construction}

\author{Gregory Lupton}
\author{John Oprea}
\author{Nicholas A. Scoville}

\address{Department of Mathematics, Cleveland State University, Cleveland OH 44115 U.S.A.}

\email{g.lupton@csuohio.edu}
\email{j.oprea@csuohio.edu}

\address{Department of Mathematics and Computer Science, Ursinus College, Collegeville PA 19426 U.S.A.}

\email{nscoville@ursinus.edu}

\date{\today}

\keywords{Digital Topology, Tolerance Space, Hopf fibration }
\subjclass[2010]{ (Primary) 55Q99, 54A99, 55R10
;  (Secondary)68R99}

\begin{abstract}Various concepts and constructions in homotopy theory have been defined in the digital setting. Although there have been several attempts at a definition of a fibration in the digital setting, robust examples of these digital fibrations are few and far between.  In this paper, we develop a digital Hopf fibration within the category of tolerance spaces. By widening our category to that of tolerance spaces, we are able to give a construction of this digital Hopf fibration which mimics the smooth setting.
\end{abstract}

\thanks{This work was partially supported by grants from the Simons Foundation: (\#209575 to Gregory Lupton
and \#244393 to John Oprea).}

\maketitle

\section{Introduction}

In recent years attempts have been made by various authors to introduce concepts from algebraic topology into the study of digital images. In particular, there have been attempts to study both digital fibrations \cite{EgeKaraca2017} and digital covering spaces \cite{BoxerKaraca2008} \cite{BoxerKaraca2010}. The notion of digital fibration, however, is problematic since digital topology is very rigid with respect to the usual notion of homotopy and certainly with respect to the homotopy lifting property. A strict translation of the definition of a classical fibration to the digital setting seems to yield  relatively uninteresting examples of fibrations. A recent paper by the authors \cite{LOS19a} has relaxed the classical definition in order to obtain more meaningful examples.  In this paper, we introduce another new approach to the idea of digital fibrations in two new ways. First, we use a notion of \emph{simple digital homotopy} as defined in \cite{Evako2015} and second, we expand our point of view to that of \emph{tolerance spaces}. With this in mind, we confine ourselves to proposing a digital analogue of the \emph{Hopf fibration}, perhaps the most important single example in the history of algebraic topology. Furthermore, this proposed construction is precisely the digital analogue of fundamental topological constructions, which in turn leads to meaningful general definitions of concepts such as suspension, cone, and fibre bundle. The classical Hopf fibration is defined by using the multiplication $\mu\colon S^1\times S^1\to S^1$ which gives a map $S^1*S^1\to S^2=\Sigma S^1$ defined by $[x,y,t]\mapsto [\mu(x,y),t]$.  We will see in Proposition \ref{prop: no circle multiplication} that no such continuous multiplication on a circle is possible in the digital setting.  Our workaround for this problem is to define a multiplication-like map that takes in two points on an $8$-point circle and yields a point on a $4$-point circle.  The main result is Theorem \ref{thm: digital Hopf fiber bundle} where we show that this is a digital fiber bundle.

An important point that seems unescapable in this context is that these constructions often lead us to work with objects that are not digital images as they are usually construed. These objects might be simply called \emph{sets with an adjacency relation}. More precisely, these sets make up a category of \emph{tolerance spaces} which was defined by Zeeman \cite{Zeeman} as well as Poincar\'{e}  (called by him ``physical continua").  Tolerance spaces have been studied by authors in recent years as well \cite{Peters12}, \cite{Sossinsky1986}, \cite{MuirWarner1980}. A tolerance space is a set together with a symmetric, reflexive relation called the tolerance and the idea is that points within the tolerance are indistinguishable. Indeed, Tim Poston \cite{Poston} referred to these as ``fuzzy spaces" and the study of these fuzzy spaces as ``fuzzy geometry." Poston also argued that the tolerance space point of view is more appropriate for studying the physical world since ``distinctions depending on an infinite number of decimal places \ldots become absurd when considered as physical statements \ldots [giving] rise to a relation of 'indistinguishability' \ldots [and] unlike the Euclidean plane or pseudo-Riemannian manifolds, then, fuzzy spaces occur as objects of direct experience."  This ``homotopy'' viewpoint is also important for us because our homotopy-inspired constructions lead us to ``fattened" spheres, which may be thought of as fuzzy versions of spheres. It is also true that, whenever we pass from the continuous to the discrete, we must make choices about what structures to keep and what to forsake. Here, for example, we see from Theorem \ref{thm no tolerance group} that the only digital groups are simple digitally contractible ones, which severely limit the number of interesting homotopy constructions akin to creating principal bundles with Lie structure groups.

\section{Tolerance spaces and digital images}

Recall that a \emph{digital image} $X$ \cite{Boxer2005} is a subset $X \subseteq \Z^n$ of the integral lattice in some $n$-dimensional Euclidean space along with a notion of adjacency (nearness) between points that is determined by the coordinates.  More generally, we may consider a set along with an adjacency relation.

\begin{definition}\label{def:tolerance space}
A \emph{tolerance space} consists of a set $X$ and a symmetric, reflexive relation denoted by $\sim$. We say that $x,y \in X$ are \emph{within the tolerance} or \emph{adjacent} if $x\sim y$. For a fixed $x$, the number of adjacencies of $x$ (excluding $x$'s adjacency to itself) is known as the \emph{valence} of $x$. A  map between tolerance spaces $f\colon X \to Y$ is \emph{continuous} if $f(x)$ and $f(y)$ are within the tolerance in $Y$ whenever $x$ and $y$ are within the tolerance in $X$. Under this definition, tolerance spaces form a category \cite[Chapter 3]{Sossinsky1986}. In this paper, we only consider finite tolerance spaces.
\end{definition}

\begin{remark} A tolerance space may be viewed as a graph with each node representing an element of the set and an edge connecting two nodes if and only if the corresponding elements are adjacent (suppressing the adjacency of a point with itself). Of course, a digital image may also be viewed as a graph. From now on we will use a tolerance space and its corresponding graph representation interchangeably.
\end{remark}

Given Definition \ref{def: digital image}, it is easy to see that digital images are just a special kind of tolerance space.  More surprisingly, every tolerance space may be embedded as a digital image.  We show this below in Proposition \ref{prop: tolerance is digital image}.

\begin{definition}\label{def:products}
The product of tolerance spaces $X$ and $Y$ is the Cartesian product of sets $X \times Y$ along with the adjacency relation $(x, y)$ adjacent to
$(x', y')$ when $x$ and $x'$ are either equal or adjacent in $X$, and $y$ and $y'$ are either equal or adjacent in $Y$. Clearly $X\times Y$ is a tolerance space.
\end{definition}

\begin{definition}\label{def: digital image} A \emph{digital image} or \emph{digital space} is any subset $A\subseteq \Z^n$ where $\Z$ is the tolerance space in which $\Z^n$ is given the product structure in Definition \ref{def:products}.
\end{definition}

In the literature, it is common to allow for different adjacencies in $\Z^n$. We do not allow this, however, for technical reasons.  Instead, we fix once and for all the adjacencies in $\Z^n$ given by \defref{def: digital image}. This definition is tantamount to assuming that $\Z^n$, and any digital image in it, has the highest degree of adjacency possible ($8$-adjacency in $\Z^2$, $26$-adjacency in $\Z^3$, etc.).

Although it is clear that the product of digital images is a digital image, it is not so clear that the tolerance spaces which result from our constructions in Section \ref{sec:cones} are in fact digital images e.g. the cone or suspension of a digital image.  However, the following result guarantees that any tolerance space is in fact able to be embedded as a digital image.\footnote{The authors wish to thank Andrea Bianchi for this result.}

\begin{proposition}\label{prop: tolerance is digital image}
Let $G$ be a finite simple graph (no multiedges and no loops from a vertex to itself).  Then $G$ may be embedded as a digital image with vertices in the hypercube $[-1, 1]^{n-1} \subseteq \Z^{n-1}$, where $n = |G|$, the number of vertices.
\end{proposition}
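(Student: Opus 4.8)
The plan is to produce an explicit embedding and then verify that it both preserves and reflects adjacency. The first step is to record what adjacency looks like inside the hypercube $\{-1,0,1\}^{n-1} = [-1,1]^{n-1} \cap \Z^{n-1}$. Under the product adjacency of Definition \ref{def:products}, two distinct points $p,q \in \{-1,0,1\}^{n-1}$ are adjacent precisely when $|p_k - q_k| \le 1$ in every coordinate $k$; since each coordinate takes values in $\{-1,0,1\}$, this can fail in coordinate $k$ only when $\{p_k,q_k\} = \{-1,+1\}$. Thus $p$ and $q$ are \emph{non}-adjacent if and only if some coordinate ``separates'' them by assigning $-1$ to one and $+1$ to the other. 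Reformulated, an embedding amounts to a map whose coordinate functions separate every non-edge of $G$, separate no edge, and are jointly injective.

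Label the vertices $v_1, \dots, v_n$ and use coordinates indexed by $k = 1, \dots, n-1$. I would define $\phi \co V(G) \to \{-1,0,1\}^{n-1}$ by setting the $k$-th coordinate of $\phi(v_i)$ equal to $+1$ if $i = k$, equal to $0$ if $i \ne k$ and $v_i \sim v_k$, and equal to $-1$ if $i \ne k$ and $v_i \not\sim v_k$. The idea is that coordinate $k$ is responsible for separating the ``pivot'' vertex $v_k$ from its non-neighbors, while the neighbors of $v_k$ are placed at $0$ so that no edge incident to $v_k$ can ever be cut in that coordinate.

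The verification then splits into three checks. For edges, if $v_i \sim v_j$, I would argue that no coordinate separates $\phi(v_i)$ from $\phi(v_j)$: the value $+1$ occurs only in coordinate $i$ or $j$, and there the other vertex, being a neighbor of the pivot, receives a $0$ rather than a $-1$. For non-edges, if $v_i \not\sim v_j$, then since at most one of $i,j$ equals $n$, at least one index, say $i$, lies in $\{1,\dots,n-1\}$; coordinate $i$ then gives $\phi(v_i)_i = +1$ and $\phi(v_j)_i = -1$, so it separates the pair. Injectivity follows from the same pivot coordinate: adjacent vertices are distinguished because the pivot gets $+1$ while its neighbor gets $0$, and non-adjacent vertices are already separated and hence distinct.

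The main obstacle, and the reason the construction uses three values rather than two, is the constraint that no edge may be separated in \emph{any} coordinate, which rules out naive sign-based encodings; reserving $0$ for the neighbors of each pivot is exactly the device that protects edges while still allowing non-edges to be cut. The only remaining subtlety is the bookkeeping forced by having just $n-1$ coordinates for $n$ vertices, which is resolved by the observation that any pair of distinct vertices has at least one endpoint among $v_1, \dots, v_{n-1}$, so the pivot construction reaches every pair.
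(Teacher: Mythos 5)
Your construction is correct, and the core device is the same one the paper uses: dedicate a coordinate to a ``pivot'' vertex, place the pivot at $+1$, its neighbors at $0$, and its non-neighbors at $-1$, so that the only way two points of $\{-1,0,1\}^{n-1}$ can fail to be adjacent is for some coordinate to assign them $+1$ and $-1$. The difference is purely structural: the paper runs an induction on $|G|$, adding one vertex and one coordinate at a time (so the newly added vertex sits at $(0,\dots,0,1)$ and the separation of each non-edge is delegated to the coordinate introduced when its later endpoint was attached), whereas you give a single closed-form formula $\phi(v_i)_k$ and verify preservation of edges, separation of non-edges, and injectivity directly. Your version buys a self-contained, non-recursive verification and makes transparent exactly which coordinates cut which non-edges; the paper's version buys a shorter write-up at each step by leaning on the inductive hypothesis. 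The one bookkeeping point you needed --- that with only $n-1$ pivots every pair of distinct vertices still contains a pivot --- is handled correctly by noting that at most one index of a pair can equal $n$. I see no gap.
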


\begin{proof}
Work by induction on $n$.  Induction starts with $n = 2$ where there is nothing to show.

Inductively assume that, if $|G| \leq n$, then we may embed $G$ as a digital image in  $[-1, 1]^{n-1}$.  Suppose we have a graph $G'$ with $n+1$ vertices.  Choose any vertex $x \in G'$ and write $G' = G \cup \{x\}$ with $|G| = n$.  Embed $G$ as a digital image in  $[-1, 1]^{n-1} \subseteq \Z^{n-1} \subseteq \Z^{n-1} \times \Z = \Z^{n}$. Then each vertex $y \in G$ has coordinates $y = (y_1, \ldots, y_{n-1}, 0) \in \Z^{n}$, and we have $y_i \in \{\pm1, 0\}$ for $i = 1, \ldots, n-1$.  Now separate $G$ into the disjoint union $G = \lk(x) \sqcup \lk(x)^C$ (set-wise, there will be adjacencies across the union, but that's OK). For each $y \in \lk(x)^C$, move it down to the plane $y_n = -1$.  In other words, adjust the embedding of $G$ in $\Z^n$ using the isomorphism of digital images $\phi\colon G \to \overline{G}$ given by
$$\phi(y_1, \ldots, y_{n-1}, 0) = \begin{cases} (y_1, \ldots, y_{n-1}, 0) & \text{if } y \in \lk(x) \\  (y_1, \ldots, y_{n-1}, -1) & \text{if }  y \in \lk(x)^C \end{cases}$$
This is an isomorphism, since we have---for $y, y' \in \Z^{n-1} \times \{0\} \subseteq \Z^n$---
$$y \sim_{\Z^n} y' \iff (y_1, \ldots, y_{n-1}) \sim_{\Z^{n-1}} (y'_1, \ldots, y'_{n-1}) \iff \phi(y) \sim_{\Z^n} \phi(y').$$
So we now have $G$ embedded in $\Z^n$ as a digital image with $\lk(x) \subseteq  [-1, 1]^{n-1} \times \{0\} \subseteq \Z^n$   and $\lk(x)^C \subseteq  [-1, 1]^{n-1} \times \{-1\} \subseteq \Z^n$.  Add $x$ as the point $x = \textbf{e}_n = (0, \ldots, 0, 1)$.  This point is adjacent to every point in  $[-1, 1]^{n-1} \times \{0\} \subseteq \Z^n$, and hence to every point of $\lk(x)$ as we have embedded it.  Furthermore,  $x = \textbf{e}_n$ is not adjacent to any point of  $[-1, 1]^{n-1} \times \{-1\} \subseteq \Z^n$, and so this produces exactly the adjacencies of $x$ from $G'$.  This completes the induction.
\end{proof}

\section{Simple digital equivalence and digital homology spheres}

\begin{definition} Let $G=(V,E)$ be a graph, $S\subseteq V$ a set of vertices.  The \emph{induced subgraph on $S$} is the graph whose set of points is $S$ and whose edges are all edges in $E$ with both endpoints in $S$.
\end{definition}

\begin{definition} Let $v\in G$ be a point.  The induced subgraph on the neighbors in $G$ of $v$ is the \emph{link of $v$}, denoted $\lk_G(v)$. When $G$ is clear from context, we sometimes write $\lk(v)$.
\end{definition}

We now recall the pertinent definitions which act as our notion of ``homotopy," due to Evako \cite{Evako94,Evako2015}, in the digital setting.

\begin{definition}\label{def: contractible} A single point is \emph{simple digitally contractible} while the empty set is not simple digitally contractible.  Inductively, suppose $G$ is a tolerance space with $n+1$ vertices and suppose that a simple digitally contractible tolerance space has been defined for any tolerance space with $1\leq i\leq n$ vertices. Suppose there is a vertex $v\in G$ such that $\lk_G(v)$ is simple digitally contractible. Then $v$ is called a \emph{simple point}. Suppose there is an edge $e=uw\in G$ such that $\lk_G(u)\cap \lk_G(w)$ is simple digitally contractible. Then $e$ is a \emph{simple edge}. Deletion of a simple vertex means removing both the simple vertex and all of its incident edges.  Attachment of a simple vertex means adding a vertex $v$ along with some set of incident edges which ensure that $v$ is simple.  Deletion of or attachment of a simple vertex or a simple edge is a \emph{simple contractible transformation}. Then $G$ is \emph{simple digitally contractible} if there is a finite sequence of simple contractible transformations from $G$ to a point. In general, $G$ and $H$ are said to be \emph{simple digitally equivalent}, denoted $G\approx H$,  if one can be obtained from the other through a series of attaching and deleting simple points and edges.
\end{definition}

%

In \cite{Evako2015}, Evako refers to this as ``homotopic" and ``digitally contractible."  However, we have chosen to add the word ``simple" in order to distinguish this from Boxer's notion \cite{Boxer2005} and other notions of digital contractibility.

\begin{example}\label{ex: complete graph} All complete graphs are simple digitally contractible. Indeed, the link of any point $v$ in a complete graph on $n$ vertices is a complete graph on $n-1$ vertices. Since the complete graph on a single point is simple digitally contractible by definition, the result follows by induction on $n$.  This fact will be used in Theorem \ref{thm no tolerance group}. By contrast, a cycle on more than $3$ points is not simple digitally contractible.
%
%
%
%
%
This is because the tolerance space has no simple points.  To see this, observe that the link of every point is two isolated points which is not simple digitally contractible since the link of both points is the empty link.
\end{example}

\begin{remark} A collection of simple points cannot be removed in any order.  That is, if $x,y\in G$ are simple, it is not necessarily the case that $y$ is simple in $G-x$.  The following example makes this clear:

$$
\begin{tikzpicture}[scale=.5,
    decoration={markings,mark=at position 0.6 with {\arrow{triangle 60}}},
    ]

\node[inner sep=2pt, circle] (a) at (-3,0) [draw] {};
\node[inner sep=2pt, circle] (b) at (3,0) [draw] {};
\node[inner sep=2pt, circle] (x) at (0,2) [draw] {};
\node[inner sep=2pt, circle] (y) at (0,-2) [draw] {};
\node[inner sep=2pt, circle] (w) at (5,0) [draw] {};

\draw [-] (a) -- (x)node[midway, below] {$$};
\draw [-] (a) -- (y)node[midway, below] {$$};
\draw [-] (b) -- (x)node[midway, below] {$$};
\draw [-] (b) -- (y)node[midway, below] {$$};
\draw [-] (x) -- (y)node[midway, below] {$$};
\draw [-] (w) -- (b)node[midway, below] {$$};

\node[anchor = south]  at (x) {\small{$x$}};
\node[anchor = north]  at (y) {\small{$y$}};
\node[anchor = north]  at (b) {\small{$z$}};
\node[anchor = north]  at (w) {\small{$w$}};

\end{tikzpicture}
$$
In addition, a point in the sequence to be removed may only become simple after some other points have been removed.  For example, $z$ is not simple in $G$, but $z$ is simple in $G-w$.

\end{remark}

\begin{example}\label{exam: circle hom}  As an easy example, we will show that the tolerance space $S^1_4$, a digital $1$-sphere with 4 points, is simple digitally equivalent to $S^1_8$, a digital $1$-sphere with 8 points.  This fact will be crucial in proving that we have a digital fiber bundle in Section \ref{sec: Digital Hopf bundle}. Starting with $S^1_4$, we attach two simple points, a simple edge between them, and remove a simple point to obtain $S^1_5$ below:

$
\begin{tikzpicture}[scale=1,
    decoration={markings,mark=at position 0.6 with {\arrow{triangle 60}}},
    ]

\node[inner sep=2pt, circle] (a) at (1,0) [draw] {};
\node[inner sep=2pt, circle] (b) at (0,1) [draw] {};
\node[inner sep=2pt, circle] (c) at (-1,0) [draw] {};
\node[inner sep=2pt, circle] (d) at (0,-1) [draw] {};

\draw [-] (a) -- (b)node[midway, below] {$$};
\draw [-] (b) -- (c)node[midway, below] {$$};
\draw [-] (c) -- (d)node[midway, below] {$$};
\draw [-] (d) -- (a)node[midway, below] {$$};

\node at (1.5,.5) {$\longrightarrow$};
\end{tikzpicture}
$
$
\begin{tikzpicture}[scale=1,
    decoration={markings,mark=at position 0.6 with {\arrow{triangle 60}}},
    ]

\node[inner sep=2pt, circle] (a) at (1,0) [draw] {};
\node[inner sep=2pt, circle] (b) at (0,1) [draw] {};
\node[inner sep=2pt, circle] (c) at (-1,0) [draw] {};
\node[inner sep=2pt, circle] (d) at (0,-1) [draw] {};
\node[inner sep=2pt, circle] (u) at (-1,1.5) [draw] {};
\node[inner sep=2pt, circle] (v) at (1,1.5) [draw] {};

\draw [-] (a) -- (b)node[midway, below] {$$};
\draw [-] (b) -- (c)node[midway, below] {$$};
\draw [-] (c) -- (d)node[midway, below] {$$};
\draw [-] (d) -- (a)node[midway, below] {$$};
\draw [-] (c) -- (u)node[midway, below] {$$};
\draw [-] (b) -- (u)node[midway, below] {$$};
\draw [-] (a) -- (v)node[midway, below] {$$};
\draw [-] (b) -- (v)node[midway, below] {$$};

\node at (1.5,.5) {$\longrightarrow$};
\end{tikzpicture}
$
$
\begin{tikzpicture}[scale=1,
    decoration={markings,mark=at position 0.6 with {\arrow{triangle 60}}},
    ]

\node[inner sep=2pt, circle] (a) at (1,0) [draw] {};
\node[inner sep=2pt, circle] (b) at (0,1) [draw] {};
\node[inner sep=2pt, circle] (c) at (-1,0) [draw] {};
\node[inner sep=2pt, circle] (d) at (0,-1) [draw] {};
\node[inner sep=2pt, circle] (u) at (-1,1.5) [draw] {};
\node[inner sep=2pt, circle] (v) at (1,1.5) [draw] {};

\draw [-] (a) -- (b)node[midway, below] {$$};
\draw [-] (b) -- (c)node[midway, below] {$$};
\draw [-] (c) -- (d)node[midway, below] {$$};
\draw [-] (d) -- (a)node[midway, below] {$$};
\draw [-] (c) -- (u)node[midway, below] {$$};
\draw [-] (b) -- (u)node[midway, below] {$$};
\draw [-] (a) -- (v)node[midway, below] {$$};
\draw [-] (b) -- (v)node[midway, below] {$$};
\draw [-] (u) -- (v)node[midway, below] {$$};

\node at (1.5,.5) {$\longrightarrow$};
\end{tikzpicture}
$
$
\begin{tikzpicture}[scale=1,
    decoration={markings,mark=at position 0.6 with {\arrow{triangle 60}}},
    ]

\node[inner sep=2pt, circle] (a) at (1,0) [draw] {};

\node[inner sep=2pt, circle] (c) at (-1,0) [draw] {};
\node[inner sep=2pt, circle] (d) at (0,-1) [draw] {};
\node[inner sep=2pt, circle] (u) at (-1,1.5) [draw] {};
\node[inner sep=2pt, circle] (v) at (1,1.5) [draw] {};

\draw [-] (c) -- (d)node[midway, below] {$$};
\draw [-] (d) -- (a)node[midway, below] {$$};
\draw [-] (c) -- (u)node[midway, below] {$$};
\draw [-] (a) -- (v)node[midway, below] {$$};
\draw [-] (u) -- (v)node[midway, below] {$$};

\end{tikzpicture}
$
This shows that $S^1_4$ is simple digitally equivalent to $S^1_5$.  The sequence of transformations may be repeated three more times to obtain $S^1_8$.

\end{example}

Simple digital equivalence is an equivalence relation and numerical quantities, such as Euler characteristic and Betti numbers, are invariant under the relation \cite{Ivan1991}.   Notice that unlike the removal of a point, removal of an edge does not have any ``physical" interpretation for a digital image.  Hence we see the further need to work in the category of tolerance spaces. In addition, it is worth noting that simple digital equivalence for tolerance spaces (digital images) is different from digital homotopy equivalence used by Boxer \cite{Boxer2005}.  It is similar to that of simple homotopy theory for simplicial complexes proposed by Whitehead \cite{Whitehead1950}.\\

We will utilize the following helpful result, which shows that when determining if a digital image is simple digitally contractible, it suffices to remove only simple vertices.

\begin{theorem}\label{thm: conractible veretx}\cite[Thereom 3.8]{Evako94} A digital image $G$ is simple digitally contractible if and only if there is a sequence of deletions of vertices $v_1, v_2, \ldots, v_n$ transforming $G$ to $K_1$.
\end{theorem}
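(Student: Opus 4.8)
The ``if'' direction is immediate from \defref{def: contractible}: deletion of a simple vertex is one of the sanctioned simple contractible transformations, so a sequence of vertex deletions carrying $G$ to $K_1$ already witnesses that $G$ is simple digitally contractible. All the content lies in the ``only if'' direction, and the plan is to reduce it to the following local statement, which I will call the \emph{key lemma}: if $G$ is simple digitally contractible and has at least two vertices, then $G$ possesses a simple vertex $v$ for which $G - v$ is again simple digitally contractible. Granting this, the theorem follows by a routine induction on the number of vertices---delete such a $v$, observe that $G - v$ is a smaller simple digitally contractible graph, repeat until only $K_1$ remains, and concatenate the deletions into the desired pure-deletion sequence.

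To prove the key lemma I would induct on the length $m$ of a chosen contracting sequence $G = G_0 \to G_1 \to \cdots \to G_m = K_1$ and inspect the \emph{first} transformation $G_0 \to G_1$. If it deletes a simple vertex $v$, there is nothing to do: $v$ is simple in $G$ and the tail of the sequence exhibits $G - v = G_1$ as simple digitally contractible. The remaining three possibilities---attaching a simple vertex, and attaching or deleting a simple edge---are the substance. The strategy is to show that each such ``bad'' opening move can either be cancelled against a later move or commuted past the next transformation, in every case strictly decreasing the total count of non-deletion operations (equivalently, the number of operations preceding the first vertex deletion), so that the inductive hypothesis applies. For an opening attachment of a vertex $w$ one traces where $w$ is eventually removed and argues that the attach/delete pair may be excised, or pushed to the right past an intervening deletion of some $u \neq w$; for an edge transformation on $u,w$ one replaces it by an equivalent block of simple vertex transformations manufactured from the common link $\lk_G(u) \cap \lk_G(w)$, which is simple digitally contractible and hence, by the inductive structure of the definition, itself collapsible one level down.

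The main obstacle is precisely this commutation-and-cancellation bookkeeping, and it is genuinely delicate for two reasons already flashed by the earlier remark that simple vertices need not be deletable in an arbitrary order. First, commuting an attachment of $w$ past a deletion of $u$ is legitimate only when the adjacency between $u$ and $w$ permits it, since $\lk_{G_1}(u)$ and $\lk_G(u)$ differ exactly by the vertex $w$; one must verify that all the relevant links remain simple digitally contractible at every intermediate stage, and this is where the recursive character of \defref{def: contractible} carries the weight. Second, eliminating an edge transformation requires exhibiting explicit auxiliary vertices---whose links are cones over $\lk_G(u) \cap \lk_G(w)$---and checking, again inductively, that attaching and then deleting them reproduces the edge change and nothing else. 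Once both reductions are established, every bad move has been traded for vertex deletions, the first move of the rewritten sequence is a deletion, and the key lemma, and with it the theorem, follows.
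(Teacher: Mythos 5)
The paper offers no argument for this statement at all: it is quoted verbatim from Evako (\cite[Theorem 3.8]{Evako94}) and used as a black box, so there is nothing internal to compare your proposal against. Judged on its own terms, your write-up is a strategy outline rather than a proof, and the gaps sit exactly where the real difficulty of Evako's theorem lives. Your ``if'' direction is fine, and the reduction to the key lemma (a contractible $G$ with $\lvert G\rvert\geq 2$ has a simple vertex $v$ with $G-v$ contractible) is the right skeleton. But the two reductions that carry all the weight are only asserted. First, excising or commuting an opening attachment of $w$ past later moves is not merely ``bookkeeping'': if $w$ is attached at step $1$ and deleted at step $k$, then for each intermediate deletion of a vertex $u$ with $u\sim w$ the link $\lk(u)$ computed without $w$ is $\lk(u)$ with a cone point (or a whole cone neighborhood) removed, and there is no general principle saying this stays simple digitally contractible --- this is the same phenomenon as the paper's own remark that simple vertices cannot be removed in an arbitrary order, and your proposal names the issue without resolving it. Second, the claim that a simple-edge transformation on $uw$ can be ``replaced by an equivalent block of simple vertex transformations manufactured from $\lk_G(u)\cap\lk_G(w)$'' is itself a nontrivial lemma of Ivashchenko--Evako; you give no construction of the auxiliary vertices and no verification that the block reproduces exactly the edge change.

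There is also a problem with your induction measure. You propose to induct on the number of non-deletion operations (or on the number of operations before the first deletion), claiming each rewrite strictly decreases it. But replacing one edge move by a block of vertex attachments and deletions can increase the number of non-deletion operations, and commuting an attachment rightward does not change their count; so as stated the induction does not obviously terminate. To make this work you would need a more carefully chosen complexity (e.g.\ a lexicographic measure on the multiset of positions of non-deletion moves together with the total sequence length), and you would need the two reduction lemmas above proved in full. As it stands the proposal is a plausible plan whose decisive steps are missing; the honest course here --- and the one the paper takes --- is to cite Evako's Theorem 3.8 rather than claim a proof.
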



Now, from \cite{Evako2015} recall that

\begin{definition}\label{def: digital sphere} A digital $0$-sphere $S^0$ is a tolerance space consisting of two disconnected points.  Recursively, a connected graph $S^n$ is called a \emph{digital $n$-sphere}, $n>0$, if
\begin{itemize}
\item[1] For any $v\in S^n$, the link $\lk_G(v)$ is a digital $(n-1)$-sphere.
\item[2] For any $v\in S^n$, $S^n-v$ is simple digitally contractible.
\end{itemize}
\end{definition}

While we will use Evako's notion of simple point, we use a more general version of a digital $n$-sphere which we call a digital homology $n$-sphere. The homology $H_*(X)$ of a tolerance space $X$ is defined to be the integral homology of its corresponding clique complex.

\begin{definition}\label{def: digital homology sphere} A digital homology $0$-sphere $S^0$ is a tolerance space consisting of two disconnected simple digitally contractible components.  Recursively, a connected tolerance space $S^n$ is called a \emph{digital homology $n$-sphere}, $n>0$, if
\begin{itemize}
\item[1$^\prime$] for any $v\in S^n$, the link $\lk_{S^n}(v)$ is simple digitally equivalent to a digital $(n-1)$-sphere.
\item[2$^\prime$] $\widetilde{H}_i(S^n)=\Z$ for $i=n$ and $0$ otherwise.
\end{itemize}
\end{definition}

Because all digital $n$-spheres \cite{Matveev2007} have reduced homology $\Z$ in dimension $n$ and $0$ otherwise, we have the following.

\begin{proposition} If $S^n$ is a digital $n$-sphere, then $S^n$ is a digital homology $n$-sphere.
\end{proposition}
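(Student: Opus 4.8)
The plan is to check the two defining conditions of a digital homology $n$-sphere (\defref{def: digital homology sphere}) directly against the definition of a digital $n$-sphere (\defref{def: digital sphere}), handling the base case $n = 0$ and the recursive conditions separately.

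First I would dispose of the base case. A digital $0$-sphere is two disconnected points, and by \defref{def: contractible} a single point is simple digitally contractible; hence each of the two points is a simple digitally contractible component, which is exactly what is demanded of a digital homology $0$-sphere.

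For $n > 0$, the connectedness hypothesis is shared by both definitions, so it remains to verify conditions $1'$ and $2'$. Condition $1'$ is immediate: by condition $1$ of \defref{def: digital sphere}, the link $\lk_{S^n}(v)$ of any vertex is already a digital $(n-1)$-sphere, and since simple digital equivalence is an equivalence relation (in particular reflexive), $\lk_{S^n}(v)$ is trivially simple digitally equivalent to a digital $(n-1)$-sphere. Condition $2'$ asks that the reduced homology of the clique complex of $S^n$ agree with that of a sphere, and this is precisely the statement cited from \cite{Matveev2007} in the sentence preceding the proposition, recalling that the homology of a tolerance space is computed via its clique complex.

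The only real content therefore lives in condition $2'$, and the main obstacle---were one to prove it rather than cite it---would be computing the homology of the clique complex of an arbitrary digital $n$-sphere. I would expect this to require an inductive argument exploiting the recursive link structure: for instance, covering $S^n$ by the stars of two suitably chosen vertices, each star being contractible with intersection deformation-retracting onto a link (a digital $(n-1)$-sphere), and then applying a Mayer--Vietoris argument, with condition $2$ of \defref{def: digital sphere} supplying the needed contractibility. Since \cite{Matveev2007} already records this homology computation, the proposition follows with no further work.
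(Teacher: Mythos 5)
Your argument matches the paper's: the paper offers no separate proof beyond the sentence preceding the proposition, which cites \cite{Matveev2007} for the homology computation, and the verification of condition $1'$ via reflexivity of simple digital equivalence is exactly the intended (trivial) step. Your proposal is correct and takes essentially the same route.
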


Section \ref{sec: Digital Hopf bundle} constructs a digital ``Hopf fibration" using a digital fiber bundle construction.  Because the total space is $S^1_8*S^1_8$ (see below), this digital space ought to somehow be a digital $3$-sphere.  Its links, however, do not satisfy condition 1) above, but they do satisfy condition 1$^\prime$).  Furthermore, because non-trivial examples of digital fibrations are few and far between in this vastly unexplored world of digital homotopy, it would seem appropriate to use this weaker definition of a digital sphere and explore its ramifications.  In other words, we will show that $S^1_8*S^1_8$ is a digital homology $3$-sphere.

\begin{example}\label{ex: sphere}
The tolerance space below satisfies both 2) and 1$^\prime$) with some links only simple digitally equivalent $S^1$ so that it does not satisfy 1). Note that edge $uv$ is a simple edge, and its removal results in a digital $2$-sphere which does satisfy 1).  Thus, we have an example of a tolerance space which satisfies 1$^\prime$) and 2) while not satisfying 1), but is simple digitally equivalent to a space that satisfies 1). In addition, it also satisfies 2$^\prime$) so that this tolerance space is a homology $2$-sphere.

$$
\begin{tikzpicture}[scale=.24,
    decoration={markings,mark=at position 0.6 with {\arrow{triangle 60}}},
    ]

\node[inner sep=2pt, circle] (1) at (8,0) [draw] {};
\node[inner sep=2pt, circle] (2) at (15,4) [draw] {};
\node[inner sep=2pt, circle] (3) at (1,10) [draw] {};
\node[inner sep=2pt, circle] (4) at (5,7) [draw] {};
\node[inner sep=2pt, circle] (5) at (12,7) [draw] {};
\node[inner sep=2pt, circle] (6) at (18,10) [draw] {};
\node[inner sep=2pt, circle] (7) at (10,12) [draw] {};
\node[inner sep=2pt, circle] (8) at (10,18) [draw] {};

\draw [-] (1) -- (3)node[midway, below] {$$};
\draw [-] (1) -- (4)node[midway, below] {$$};
\draw [-] (1) -- (5)node[midway, below] {$$};
\draw [-] (1) -- (2)node[midway, below] {$$};

\draw [-] (2) -- (5)node[midway, below] {$$};
\draw [dashed] (2) -- (3)node[midway, below] {$$};
\draw [dashed] (2) -- (7)node[midway, below] {$$};
\draw [-] (2) -- (6)node[midway, below] {$$};

\draw [-] (3) -- (8)node[midway, below] {$$};
\draw [-] (3) -- (4)node[midway, below] {$$};
\draw [dashed] (3) -- (7)node[midway, below] {$$};

\draw [-] (4) -- (5)node[midway, below] {$$};
\draw [dashed] (4) -- (7)node[midway, below] {$$};
\draw [-] (4) -- (8)node[midway, below] {$$};

\draw [-] (5) -- (8)node[midway, below] {$$};
\draw [-] (5) -- (6)node[midway, below] {$$};

\draw [dashed] (6) -- (7)node[midway, below] {$$};
\draw [-] (6) -- (8)node[midway, below] {$$};

\draw [dashed] (8) -- (7)node[midway, below] {$$};

\node[anchor = north east]  at (4) {\small{$u$}};
\node[anchor = north]  at (7) {\small{$v$}};

\end{tikzpicture}
$$

\end{example}

\begin{example}\label{exam:digcirc}
Recall from Example \ref{exam: circle hom} the digital $1$-sphere $S^1_4$.  This is also called the \emph{Diamond}, and it can be viewed in $\Z^2$ by taking the points
$(1,0),\ (0,1),\ (-1,0),\ (0,-1)$ along with $8$-adjacency. Using the definition of homotopy in \cite{Boxer2005}, it can be shown that it is digitally contractible. However, for our notion of
homotopy given in \defref{def: contractible}, this is no longer the case. For us, this is much more intuitive and reflects the non-trivial hole in the diamond.
\end{example}

%
%
%

It is well known that the classical Hopf fibration is not only a fiber bundle, but has the additional property of being principal; that is, there is a group action satisfying certain properties and interacting nicely with the continuity of the Hopf fibration.  Unfortunately the following result emphasizes the chasm that lies between topology and the digital world. By a digital group $G$, we mean a tolerance space $G$ with tolerance $\sim$ and a continuous group multiplication $\cdot$ with continuous inverse on $G$ in the sense that if $a\sim b$ and $c\sim d$ then $a\cdot c\sim b\cdot d$. For any digital group $G$ and $g\in G$, the \emph{star} of $g$ in $G$, denoted $\mathrm{st}(g)$ is the induced subgraph on $g\cup \lk(g)$.

\begin{theorem}\label{thm no tolerance group} Suppose $G$ is a finite, connected digital group.   Then $G$ is simple digitally contractible.
\end{theorem}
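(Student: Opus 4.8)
The plan is to prove the stronger statement that $G$ is a complete graph; the theorem then follows immediately from \exref{ex: complete graph}, which establishes that every finite complete graph is simple digitally contractible. The main tool is that the group structure forces strong symmetry on the adjacency relation. Since the tolerance $\sim$ is reflexive, continuity of $\cdot$ shows that for each $g \in G$ the left translation $L_g \colon x \mapsto g\cdot x$ is continuous: if $x \sim y$, then $g \sim g$ together with $x \sim y$ gives $gx \sim gy$. Continuity of the inverse makes $L_{g^{-1}}$ a continuous two-sided inverse for $L_g$, so every left translation is an isomorphism of tolerance spaces. In particular $L_g$ carries $\mathrm{st}(e)$ isomorphically onto $\mathrm{st}(g)$, where $e$ denotes the identity.

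The crux is to show that $\mathrm{st}(e)$ is a clique, i.e. that any two elements $a, b$ lying in $\mathrm{st}(e)$ are adjacent. Reading membership in $\mathrm{st}(e)$ as $a \sim e$ and $b \sim e$ (the reflexive convention absorbs the cases $a = e$ or $b = e$), I would argue by a short chain of continuity deductions. Continuity of the inverse gives $b^{-1} \sim e^{-1} = e$; continuity of $\cdot$ applied to $a \sim e$ and $b^{-1} \sim e$ gives $a b^{-1} \sim e\cdot e = e$; and finally continuity of $\cdot$ applied to $a b^{-1} \sim e$ and $b \sim b$ gives $a = (ab^{-1})b \sim e\cdot b = b$. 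Thus $a \sim b$, and $\mathrm{st}(e)$ is a complete graph.

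By the translation isomorphisms of the first paragraph, every star $\mathrm{st}(g)$ is then a clique. I would use this to show that $\sim$ is transitive: if $a \sim b$ and $b \sim c$, then $a, c \in \mathrm{st}(b)$, which is a clique, so $a \sim c$. Being reflexive, symmetric, and transitive, $\sim$ is an equivalence relation whose classes are mutually non-adjacent cliques, so $G$ is a disjoint union of complete graphs. Since $G$ is connected, it consists of a single class and is therefore a complete graph, completing the reduction. I expect the only delicate point to be the bookkeeping of the reflexivity convention, so that the identity is handled uniformly along the chain $b \mapsto b^{-1} \mapsto ab^{-1} \mapsto a$; the genuinely essential hypotheses are connectedness (without it $G$ could be an arbitrary disjoint union of cliques) and continuity of the inverse (used to place $b^{-1}$ in $\mathrm{st}(e)$).
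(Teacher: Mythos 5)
Your proposal is correct, and its skeleton coincides with the paper's: both prove the stronger claim that $G$ is a complete graph by first showing $\mathrm{st}(e)$ is a clique and then using left translations to propagate this to every star. The differences are in the details at both ends. For the cliqueness of $\mathrm{st}(e)$, the paper gets $a\sim b$ in a single application of continuity of multiplication (from $a\sim e$ and $e\sim b$ one has $a=a\cdot e\sim e\cdot b=b$), whereas you route through $b^{-1}\sim e$ and $ab^{-1}\sim e$; your three-step chain is valid but genuinely uses continuity of the inverse, which the paper's one-step argument avoids (the paper instead invokes finiteness, via $a^k\sim e$ for all $k$, only to show $\mathrm{st}(e)$ is a subgroup). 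For the endgame, the paper argues by counting: every star is $K_n$ for the same $n$, so a vertex of $\lk(e)$ already has its full valence inside $\mathrm{st}(e)$ and cannot be adjacent to anything outside, forcing $\mathrm{st}(e)$ to be a connected component. Your observation that "every star is a clique" is exactly the statement that $\sim$ is transitive, hence an equivalence relation, is cleaner: it decomposes $G$ into mutually non-adjacent cliques without needing the stars to have equal cardinality, and connectedness finishes immediately. Both routes are sound; yours trades the subgroup/counting bookkeeping for a slightly heavier use of the continuity-of-inverse hypothesis.
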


\begin{proof} We prove the slightly stronger result that $G$ is in fact a complete graph.   Let $e\in G$ be the identity.  We first show that $\mathrm{st}(e)$ is a complete graph and a subgroup.  Let $a,b\in \mathrm{st}(e)$.  Then $a\sim e$ and $b\sim e$, so both $ab\sim e$ and $a\sim b$.  This shows that $\mathrm{st}(e)$ is both a complete graph and closed under multiplication.  Furthermore, if $a\in \mathrm{st}(e)$, then $a\sim e$ and $a\sim e$ so that $a^2\sim e$.  Inductively, $a^k\sim e$ for all powers $k$ and since $G$ is finite, $a^{-1}\sim e$. Hence $\mathrm{st}(e)$ is a subgroup. Now $\mathrm{st}(x)$ and $\mathrm{st}(e)$ are isomorphic as groups for every $x\in G$ since $x\,\mathrm{st}(e)=\mathrm{st}(x)$ and left multiplication by a fixed $x$ on a fixed group is a group isomorphism.  Thus the star of every element of $G$ is the complete graph $K_n$ for some $n$.  But since $G$ is connected,
this means that, in fact, $G$ must equal $K_n$.  Indeed, if $b \in \mathrm{lk}(e)$, then $b$ is adjacent to $e$ and the other $n-2$ elements in the link of $e$. But $\mathrm{st}(b)=
K_n$ and $b$ is already adjacent to $n-1$ points, so it cannot be adjacent to any other point in $G$. Since this holds for all $b \in \mathrm{lk}(e)$, we see that
$\mathrm{st}(e)=K_n$ is not connected to any other point of $G$. This contradicts the connectedness of $G$ unless $G=\mathrm{st}(e)=K_n$. We conclude that
$G$ is a complete graph and by Example \ref{ex: complete graph}, simple digitally contractible.
\end{proof}

\section{Cones, Suspensions, and Joins}\label{sec:cones}
Let $X$ be a tolerance space. We now define the straightforward analogues of basic (and essential) homotopy constructs.

\begin{definition}\label{def:conesuspjoin}
(1.) The \emph{cone} on $X$, denoted $CX$, is given by $CX = X \cup \{A\}$ where $A$ is taken to be adjacent to all $x \in X$. We refer to $A$ as the \emph{apex} of $CX$.

(2.) The \emph{suspension} on $X$, denoted $SX$, is given by $SX = X \cup\{A,B\}$ with $A\sim x$, $B \sim x$ for all $x \in X$
and $A \not\sim B$.

(3.) If $Y$ is another set with adjacency, then the \emph{join} of $X$ and $Y$ , denoted $X\ast Y$, is given by
$$X \ast Y = X \times CY \cup CX \times Y.$$
The adjacency relation comes from the product adjacency relation in $X \times CY$ and $CX\times Y$.

\end{definition}

From Proposition \ref{prop: tolerance is digital image} it follows that there is a digital cone and suspension, at least as long as we allow (re-)embedding the digital image in a higher-dimensional ambient $\Z^{n}$.

\begin{corollary}
If $X$ is a digital image, we may embed $X$ in $\Z^{n}$ and form a cone or suspension in $\Z^{n}$, where $n = |X|$, the number of vertices.
\end{corollary}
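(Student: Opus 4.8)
The plan is to obtain both constructions by re-using the final step of the induction in the proof of \propref{prop: tolerance is digital image}, in which a single vertex adjacent to an entire hypercube face was adjoined. First I would invoke \propref{prop: tolerance is digital image} to embed $X$, viewed as a finite simple graph on $n = |X|$ vertices, as a digital image with vertices in $[-1,1]^{n-1} \subseteq \Z^{n-1}$. Identifying $\Z^{n-1}$ with $\Z^{n-1} \times \{0\} \subseteq \Z^n$, I then regard every $x \in X$ as a point $(x_1, \ldots, x_{n-1}, 0)$ with $x_i \in \{-1, 0, 1\}$. The single extra coordinate direction is precisely what will accommodate the apex (for the cone) or the two apices (for the suspension).

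For the cone, I would adjoin the apex at $A = \textbf{e}_n = (0, \ldots, 0, 1)$. Exactly as in the cited proof, $A$ differs from each $(x_1, \ldots, x_{n-1}, 0)$ by at most $1$ in every coordinate, so $A$ is adjacent in $\Z^n$ to every point of $[-1,1]^{n-1} \times \{0\}$, and in particular to all of $X$; since no other vertices are present, these are precisely the adjacencies demanded by $CX = X \cup \{A\}$ in \defref{def:conesuspjoin}. Hence $CX$ sits inside $\Z^n$ as a digital image. For the suspension I would instead adjoin two apices, $A = \textbf{e}_n = (0, \ldots, 0, 1)$ and $B = -\textbf{e}_n = (0, \ldots, 0, -1)$. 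The same coordinate estimate shows that each of $A$ and $B$ is adjacent to all of $X$, and the required non-adjacency $A \not\sim B$ holds because $A$ and $B$ differ in their last coordinate by $2$ and so are not within the $\Z^n$ tolerance. This realizes $SX = X \cup \{A, B\}$ as a digital image in $\Z^n$.

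The only point requiring care---and it is the mild obstacle here---is confirming that adjoining the apices introduces no unintended adjacencies. The embedding of $X$ is left completely untouched, so no new edges arise among the old vertices, and the coordinate computations above confirm that $A$ (and, for the suspension, $B$) acquires exactly the intended neighbors while $A$ and $B$ remain mutually non-adjacent. Thus both $CX$ and $SX$ are realized as digital images in the single ambient $\Z^n$ with $n = |X|$, which is the content of the corollary.
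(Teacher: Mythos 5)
Your proposal is correct and follows essentially the same route as the paper: embed $X$ in $[-1,1]^{n-1}\times\{0\}\subseteq\Z^n$ via \propref{prop: tolerance is digital image}, place the apex at $\textbf{e}_n$ (and $-\textbf{e}_n$ for the suspension), and observe that the two apices are non-adjacent. The extra verification that no unintended adjacencies arise is a welcome but minor elaboration of the same argument.
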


\begin{proof}
Embed $X$ in $[-1, 1]^{n-1} \subseteq \Z^{n-1} \subseteq \Z^{n-1} \times \{0\} \subseteq \Z^{n}$ as above.  For $CX$, take the apex  $\{A\} = \textbf{e}_n = (0, \ldots, 0, 1)$, which is adjacent to every point in  $[-1, 1]^{n-1} \times \{0\} \subseteq \Z^n$, and hence to every point of $X$ as we have embedded it.  For $SX$, form upper and lower cones with   $\{A\} = \textbf{e}_n$ and $\{B\} = -\textbf{e}_n$.  Note that these $\{A\}$ and $\{B\}$  are not adjacent to each other.
\end{proof}

\begin{proposition}\label{prop: cone contractible} The cone $CX$ on any tolerance space $X$ is simple digitally contractible.
\end{proposition}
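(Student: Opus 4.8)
The plan is to exhibit an explicit simple contractible transformation that reduces $CX$ to a single point in one step, namely the apex $A$. Recall that a vertex $v$ is simple precisely when $\lk_{CX}(v)$ is simple digitally contractible, and that deletion of a simple vertex is a simple contractible transformation. So the strategy is to delete, one at a time, every vertex of $X$ from $CX$, leaving only the apex $A$ at the end, and to check that at each stage the vertex we delete is simple.

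First I would observe that the key structural fact about $CX$ is that the apex $A$ is adjacent to \emph{every} vertex of $X$. Consequently, for any vertex $x \in X \subseteq CX$, the link $\lk_{CX}(x)$ contains $A$, and moreover $A$ is adjacent within $\lk_{CX}(x)$ to every other neighbor of $x$ (again because $A$ is adjacent to all of $X$). This means $A$ is a ``dominating'' vertex of the link $\lk_{CX}(x)$: the star of $A$ inside $\lk_{CX}(x)$ is all of $\lk_{CX}(x)$. The essential sub-claim is that whenever a finite graph has a vertex adjacent to all others, that graph is simple digitally contractible. Indeed, such a configuration lets us peel off vertices while always keeping a universally-adjacent vertex present, reducing to a single point.

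The cleanest way to package this is to prove the auxiliary statement that any tolerance space possessing a vertex adjacent to all other vertices (a \emph{cone-like} space) is simple digitally contractible, by induction on the number of vertices. The base case is a single point, which is simple digitally contractible by definition. For the inductive step, in a space $Y$ with such a universal vertex $A$ and at least two vertices, pick any vertex $y \neq A$; its link $\lk_Y(y)$ again contains $A$ adjacent to everything in that link, so $\lk_Y(y)$ is itself cone-like and hence simple digitally contractible by the induction hypothesis, making $y$ a simple vertex. Deleting $y$ yields a smaller space still containing the universal vertex $A$, so we may repeat. After deleting all vertices other than $A$, we are left with the single point $A$. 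Applying this to $Y = CX$, whose apex $A$ is universal by construction, gives the result; alternatively one invokes Theorem \ref{thm: conractible veretx} to note it suffices to delete vertices.

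The main obstacle I anticipate is the bookkeeping of the induction: one must be careful that the induction is on vertex count (as the definition of simple digital contractibility is itself inductive on vertex count), and that the universal-adjacency property is genuinely inherited both by the links $\lk_Y(y)$ and by the spaces $Y - y$ obtained after deletion. Both inheritances are immediate from the fact that $A$ remains adjacent to all surviving vertices, but stating them carefully is where the argument must be precise. There is no real computational difficulty; the subtlety is purely in organizing the nested induction so that ``$y$ is simple in $Y$'' is justified by the induction hypothesis applied to the strictly smaller link $\lk_Y(y)$.
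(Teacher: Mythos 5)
Your proposal is correct and follows essentially the same route as the paper: both arguments rest on the observation that for any non-apex vertex $v$, the apex $A$ lies in $\lk_{CX}(v)$ and is adjacent to all other neighbors of $v$, so the link is again a cone, and an induction on the number of vertices finishes the job. Your reformulation in terms of ``cone-like'' spaces (spaces with a universal vertex) is just a relabeling of the paper's induction on cones, so there is nothing substantive to distinguish the two.
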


\begin{proof} We proceed by induction on the number of vertices in a cone $CX$.  If $X$ is a one point digital image, clearly $CX$ is simple digitally contractible. Now assume that any cone on $k\geq 1$ vertices is simple digitally contractible, and let $X$ be a tolerance space with $k+1$ vertices. Consider any point $v\in CX$, $v\neq A$.  Now $A\in \lk_{CX}(v)$ and furthermore, $A$ is adjacent to all the neighbors of $v$, so $\lk_{CX}(v)$ is itself a cone on the neighbors of $v$ with apex $A$.  By the inductive hypothesis, the link of $v$ is simple digitally contractible, so $v$ is a simple point and may be removed from $CX$, yielding $C(X-\{v\})$, which itself is a cone on $k$ vertices.  Again by the inductive hypothesis, $C(X-\{v\})$ is simple digitally contractible.  Thus $CX$ is simple digitally contractible and the result is shown.
\end{proof}

\begin{remark}\label{rem:conesuspjoin}
Denote the cone on $X$ as above and the cone on $Y$ as $CY=Y \cup \{B\}$. Then the join of $X$ and $Y$ can be pictured as having
three levels: $X \times \{B\}$ on top, $X \times Y$ in the middle and $\{A\} \times Y$ on the bottom. In particular, a point $(x,B)$ on top
is adjacent only to all  $(x',y) \in X \times Y$ where $x\sim x' \in X$ (since $B\sim y$ for all $y \in Y$). Of course, $(x,B)\sim (x',B)$ for
these same $x'\in X$ as well. Similarly we can identify adjacencies between the middle and bottom level.
\end{remark}

\begin{example}\label{exam:spheres}
If $X=S^1_4=S^1$ as in Example \ref{exam:digcirc}, then $CS^1 = S^1 \cup \{A\}$, so we obtain the top half of an octahedron. This is the digital $2$-disk $D^2 = CS^1$, which is isomorphic to the Diamond $D$ with the hole at $(0,0)$ filled in. The suspension on $S^1$ is given by $SS^1 = S^1 \cup \{A,B\}$
which is simply two $2$-disks sewn together along their boundaries. This then is the digital $2$-sphere $S^2$ (with $6$ points).
Continuing in this fashion produces spheres $S^n$ with $2n+2$ points.
\end{example}

\begin{example}\label{exam: S2 with any equator} We illustrate how $S^2$ defined in Example \ref{exam:spheres} is simple digitally equivalent to the suspension on any cycle of length greater than 3. Starting with $S^2$, we attach a simple point and remove a simple edge as below:

$
\begin{tikzpicture}[scale=.43,
    decoration={markings,mark=at position 0.6 with {\arrow{triangle 60}}},
    ]

\node[inner sep=2pt, circle] (a) at (3,0) [draw] {};
\node[inner sep=2pt, circle] (b) at (8,2) [draw] {};
\node[inner sep=2pt, circle] (c) at (5,3) [draw] {};
\node[inner sep=2pt, circle] (d) at (0,2) [draw] {};
\node[inner sep=2pt, circle] (N) at (4,5) [draw] {};
\node[inner sep=2pt, circle] (S) at (4,-3.5) [draw] {};

\draw [-] (c) -- (d)node[midway, below] {$$};
\draw [-] (d) -- (a)node[midway, below] {$$};

\draw [-] (N) -- (b)node[midway, below] {$$};
\draw [-] (N) -- (c)node[midway, below] {$$};
\draw [-] (N) -- (d)node[midway, below] {$$};
\draw [-] (S) -- (a)node[midway, below] {$$};

\draw [-] (S) -- (c)node[midway, below] {$$};
\draw [-] (S) -- (d)node[midway, below] {$$};
\draw [white, line width=1.5mm] (a) -- (b) node[midway, left] {$$};
\draw [-] (a) -- (b)node[midway, below] {$$};
\draw [white, line width=1.5mm] (N) -- (a) node[midway, left] {$$};
\draw [-] (N) -- (a)node[midway, below] {$$};
\draw [-] (S) -- (b)node[midway, below] {$$};
\draw [-] (b) -- (c)node[midway, below] {$$};

\node at (10,1) {$\longrightarrow$};
\end{tikzpicture}
$
$
\begin{tikzpicture}[scale=.43,
    decoration={markings,mark=at position 0.6 with {\arrow{triangle 60}}},
    ]

\node[inner sep=2pt, circle] (a) at (3,0) [draw] {};
\node[inner sep=2pt, circle] (b) at (8,2) [draw] {};
\node[inner sep=2pt, circle] (c) at (5,3) [draw] {};
\node[inner sep=2pt, circle] (d) at (0,2) [draw] {};
\node[inner sep=2pt, circle] (N) at (4,5) [draw] {};
\node[inner sep=2pt, circle] (S) at (4,-3.5) [draw] {};
\node[inner sep=2pt, circle] (x) at (0,0) [draw] {};

\draw [-] (c) -- (d)node[midway, below] {$$};
\draw [-] (d) -- (a)node[midway, below] {$$};

\draw [-] (N) -- (b)node[midway, below] {$$};
\draw [-] (N) -- (c)node[midway, below] {$$};

\draw [-] (S) -- (a)node[midway, below] {$$};

\draw [-] (S) -- (c)node[midway, below] {$$};
\draw [-] (S) -- (d)node[midway, below] {$$};
\draw [-] (x) -- (d)node[midway, below] {$$};
\draw [white, line width=1.5mm] (x) -- (a) node[midway, left] {$$};
\draw [-] (x) -- (a)node[midway, below] {$$};

\draw [-] (x) -- (S)node[midway, below] {$$};
\draw [white, line width=1.5mm] (a) -- (b) node[midway, left] {$$};
\draw [-] (a) -- (b)node[midway, below] {$$};
\draw [white, line width=1.5mm] (N) -- (a) node[midway, left] {$$};

\draw [white, line width=1.5mm] (x) -- (N) node[midway, left] {$$};
\draw [-] (x) -- (N)node[midway, below] {$$};
\draw [-] (N) -- (d)node[midway, below] {$$};
\draw [-] (S) -- (b)node[midway, below] {$$};
\draw [-] (b) -- (c)node[midway, below] {$$};
\draw [-] (N) -- (a)node[midway, below] {$$};

\node at (10,1) {$\longrightarrow$};
\end{tikzpicture}
$
$
\begin{tikzpicture}[scale=.43,
    decoration={markings,mark=at position 0.6 with {\arrow{triangle 60}}},
    ]

\node[inner sep=2pt, circle] (a) at (3,0) [draw] {};
\node[inner sep=2pt, circle] (b) at (8,2) [draw] {};
\node[inner sep=2pt, circle] (c) at (5,3) [draw] {};
\node[inner sep=2pt, circle] (d) at (0,2) [draw] {};
\node[inner sep=2pt, circle] (N) at (4,5) [draw] {};
\node[inner sep=2pt, circle] (S) at (4,-3.5) [draw] {};
\node[inner sep=2pt, circle] (x) at (0,0) [draw] {};

\draw [-] (c) -- (d)node[midway, below] {$$};

\draw [-] (N) -- (b)node[midway, below] {$$};
\draw [-] (N) -- (c)node[midway, below] {$$};

\draw [-] (S) -- (a)node[midway, below] {$$};

\draw [-] (S) -- (c)node[midway, below] {$$};
\draw [-] (S) -- (d)node[midway, below] {$$};
\draw [-] (x) -- (d)node[midway, below] {$$};
\draw [white, line width=1.5mm] (x) -- (a) node[midway, left] {$$};
\draw [-] (x) -- (a)node[midway, below] {$$};

\draw [-] (x) -- (S)node[midway, below] {$$};
\draw [white, line width=1.5mm] (a) -- (b) node[midway, left] {$$};
\draw [-] (a) -- (b)node[midway, below] {$$};
\draw [white, line width=1.5mm] (N) -- (a) node[midway, left] {$$};

\draw [white, line width=1.5mm] (x) -- (N) node[midway, left] {$$};
\draw [-] (x) -- (N)node[midway, below] {$$};
\draw [-] (N) -- (d)node[midway, below] {$$};
\draw [-] (S) -- (b)node[midway, below] {$$};
\draw [-] (b) -- (c)node[midway, below] {$$};
\draw [-] (N) -- (a)node[midway, below] {$$};

\end{tikzpicture}
$

Continuing in this manner, we obtain the suspension on a cycle of any length.

\end{example}
%
%

\section{A digital homology $3$-sphere}

We devote this section to constructing what we view as a representation of $S^3$ in the tolerance space setting.  We give this as a join of circles because subsequently we wish to mimic the Hopf construction to obtain an analogue of the Hopf map on this $S^3$.  Now this construction in the topological setting is based upon having a multiplication on the circle.  As we shall see, however, there is no such multiplication in the tolerance setting (Proposition \ref{prop: no circle multiplication}).  It is this choice that drives our choice of $S^1_8$ as the circle we use to construct our $S^3$.

Let $S^1_8$ be an eight point circle, and consider two copies $X,Y$ of this digital space with labelings given by

{
\begin{center}
$
\begin{tikzpicture}[scale=.5,
    decoration={markings,mark=at position 0.6 with {\arrow{triangle 60}}},
    ]

\node[inner sep=2pt, circle] (1) at (4,3.5) [draw] {};
\node[inner sep=2pt, circle] (2) at (2,5) [draw] {};
\node[inner sep=2pt, circle] (3) at (0,5) [draw] {};
\node[inner sep=2pt, circle] (4) at (-2,3.5) [draw] {};
\node[inner sep=2pt, circle] (5) at (-2,1.5) [draw] {};
\node[inner sep=2pt, circle] (6) at (0,0) [draw] {};
\node[inner sep=2pt, circle] (7) at (2,0) [draw] {};
\node[inner sep=2pt, circle] (8) at (4,1.5) [draw] {};

\draw [-] (1) -- (2)node[midway, below] {$$};
\draw [-] (2) -- (3)node[midway, below] {$$};
\draw [-] (3) -- (4)node[midway, below] {$$};
\draw [-] (4) -- (5)node[midway, below] {$$};
\draw [-] (5) -- (6)node[midway, below] {$$};
\draw [-] (7) -- (6)node[midway, below] {$$};
\draw [-] (7) -- (8)node[midway, below] {$$};
\draw [-] (8) -- (1)node[midway, below] {$$};

\node[anchor = west]  at (1) {\small{$x_1$}};
\node[anchor = south]  at (2) {\small{$x_2$}};
\node[anchor = south]  at (3) {\small{$x_3$}};
\node[anchor = east]  at (4) {\small{$x_4$}};
\node[anchor = east]  at (5) {\small{$x_5$}};
\node[anchor = north]  at (6) {\small{$x_6$}};
\node[anchor = north]  at (7) {\small{$x_7$}};
\node[anchor = west]  at (8) {\small{$x_8$}};.
\end{tikzpicture}
$
\hfill
$
\begin{tikzpicture}[scale=.5,
    decoration={markings,mark=at position 0.6 with {\arrow{triangle 60}}},
    ]

\node[inner sep=2pt, circle] (1) at (4,3.5) [draw] {};
\node[inner sep=2pt, circle] (2) at (2,5) [draw] {};
\node[inner sep=2pt, circle] (3) at (0,5) [draw] {};
\node[inner sep=2pt, circle] (4) at (-2,3.5) [draw] {};
\node[inner sep=2pt, circle] (5) at (-2,1.5) [draw] {};
\node[inner sep=2pt, circle] (6) at (0,0) [draw] {};
\node[inner sep=2pt, circle] (7) at (2,0) [draw] {};
\node[inner sep=2pt, circle] (8) at (4,1.5) [draw] {};

\draw [-] (1) -- (2)node[midway, below] {$$};
\draw [-] (2) -- (3)node[midway, below] {$$};
\draw [-] (3) -- (4)node[midway, below] {$$};
\draw [-] (4) -- (5)node[midway, below] {$$};
\draw [-] (5) -- (6)node[midway, below] {$$};
\draw [-] (7) -- (6)node[midway, below] {$$};
\draw [-] (7) -- (8)node[midway, below] {$$};
\draw [-] (8) -- (1)node[midway, below] {$$};

\node[anchor = west]  at (1) {\small{$y_1$}};
\node[anchor = south]  at (2) {\small{$y_2$}};
\node[anchor = south]  at (3) {\small{$y_3$}};
\node[anchor = east]  at (4) {\small{$y_4$}};
\node[anchor = east]  at (5) {\small{$y_5$}};
\node[anchor = north]  at (6) {\small{$y_6$}};
\node[anchor = north]  at (7) {\small{$y_7$}};
\node[anchor = west]  at (8) {\small{$y_8$}};.
\end{tikzpicture}
$
\end{center}}

\begin{theorem}\label{thm: join of S1 is S3} Let $X\ast Y=S^1_8\ast S^1_8=CS^1_8\times S^1_8\cup S^1_8\times CS^1_8=\{A\cup S^1_8\}\times S^1_8\cup S^1_8\times \{B\cup S_8^1\}$. Then $X\ast Y$ is a digital homology $3$-sphere.
\end{theorem}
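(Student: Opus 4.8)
The plan is to verify directly the two conditions (1$'$) and (2$'$) of Definition \ref{def: digital homology sphere}, after first disposing of connectivity: the middle level $X\times Y$ is a product of connected tolerance spaces (Definition \ref{def:products}) and is therefore connected, and every top vertex $(x_i,B)$ and bottom vertex $(A,y_j)$ is adjacent to middle vertices, so all of $X\ast Y$ is connected.

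The heart of the argument is condition (1$'$), and I would treat the three vertex types separately. For a \emph{top} vertex $(x_i,B)$, the link consists of the two non-adjacent vertices $(x_{i-1},B),(x_{i+1},B)$ together with the slab $\{x_{i-1},x_i,x_{i+1}\}\times Y$. I would delete the two outer columns $\{x_{i-1}\}\times Y$ and $\{x_{i+1}\}\times Y$ one vertex at a time, checking at each stage that the vertex being removed is simple: its link is a cone with apex the adjacent pole $(x_{i-1},B)$ or $(x_{i+1},B)$, hence simple digitally contractible by Proposition \ref{prop: cone contractible}. What survives is the suspension of the central circle $\{x_i\}\times Y\cong S^1_8$, which is simple digitally equivalent to the digital $2$-sphere by Examples \ref{exam:spheres} and \ref{exam: S2 with any equator}. \emph{Bottom} vertices are symmetric. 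For a \emph{middle} vertex $(x_i,y_j)$ the link has $14$ vertices: the eight ``king-move'' neighbors in the grid $X\times Y$, the three top vertices $(x_{i-1},B),(x_i,B),(x_{i+1},B)$, and the three bottom vertices $(A,y_{j-1}),(A,y_j),(A,y_{j+1})$. Here I would first delete the four corner grid-neighbors, each simple because one of its two cardinal grid-neighbors is adjacent to every other vertex of its link (exhibiting that link as a cone), and then delete the four outer poles $(x_{i\pm1},B)$, $(A,y_{j\pm1})$, each now simple with link a cone apexed at the central pole $(x_i,B)$ or $(A,y_j)$. This leaves the six-point octahedral $S^2=S^0\ast S^0\ast S^0$, a digital $2$-sphere.

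For condition (2$'$) I would compute the homology of the clique complex $\mathrm{Cl}(X\ast Y)$ from the decomposition $X\ast Y=(X\times CY)\cup(CX\times Y)$ with intersection $X\times Y$; note that a clique cannot contain both a top and a bottom vertex (these are never adjacent), so this is genuinely a union of the two subcomplexes along $\mathrm{Cl}(X\times Y)$. Since the apex $B$ is adjacent to all of $Y$, the complex $\mathrm{Cl}(CY)$ is a simplicial cone, hence contractible, so using that the clique complex of a product (Definition \ref{def:products}) is homotopy equivalent to the product of the factors' clique complexes, $\mathrm{Cl}(X\times CY)$ and $\mathrm{Cl}(CX\times Y)$ each have the homology of $S^1$ while $\mathrm{Cl}(X\times Y)$ has the homology of $S^1\times S^1$. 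A Mayer--Vietoris argument identifies the inclusion-induced map $H_1(X\times Y)\to H_1(X\times CY)\oplus H_1(CX\times Y)$ with the isomorphism $\Z^2\to\Z^2$ that collapses each circle factor in turn, forcing $\widetilde H_i(X\ast Y)=0$ for $i\neq 3$ and $\widetilde H_3(X\ast Y)\cong\widetilde H_2(X\times Y)\cong\Z$. Equivalently, this union is precisely the standard homotopy-pushout model of the join, so $|\mathrm{Cl}(X\ast Y)|\simeq S^1\ast S^1=S^3$, giving condition (2$'$) at once.

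The main obstacle is the condition (1$'$) bookkeeping, and in particular the middle-vertex link: one must fix an explicit order of simple deletions and verify at each step that the vertex being removed is \emph{still} simple, since simpleness is destroyed and created by deletions, as the Remark following Definition \ref{def: contractible} emphasizes. A secondary technical point is justifying the homotopy equivalence between the clique complex of a product of tolerance spaces and the product of the clique complexes, on which the homology computation in (2$'$) rests; this either needs a short acyclic-carrier/nerve argument or a citation.
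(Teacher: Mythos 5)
Your argument for condition (1$'$) is essentially the paper's: both proofs peel simple vertices off each link until a recognizable digital $2$-sphere remains, reducing the pole links to the suspension of $S^1_8$ (then invoking Example \ref{exam: S2 with any equator}) and the $14$-point middle links to the $6$-point octahedron, and your cone-with-apex justifications for simplicity are the same ones the paper uses. The only cosmetic differences are that you reverse the order in the middle case (corners first, then outer poles) and that your pole link has $26$ vertices, as dictated by Remark \ref{rem:conesuspjoin} and the SAGE construction (top and bottom vertices of the join are never adjacent), whereas the paper's stated Type I link also lists the opposite rim $S^1_8\times\{B\}$; since the paper removes those extra vertices first, both reductions land on the same suspension and nothing hinges on this discrepancy. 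Where you genuinely diverge is condition (2$'$): the paper simply computes the clique-complex homology in SAGE, while you give a conceptual Mayer--Vietoris argument from the decomposition $X\ast Y=(X\times CY)\cup(CX\times Y)$ over $X\times Y$, using that a clique cannot contain both a top and a bottom vertex. This buys a machine-free and generalizable proof (it would work verbatim for joins of other cycles), at the cost of one auxiliary fact you correctly flag: that the clique complex of a strong product is homotopy equivalent to the product of the clique complexes. That statement is true---the clique complex of a strong product is the categorical product of the clique complexes, whose realization is classically homotopy equivalent to the product of the realizations---but it is neither proved nor cited in the paper, so your write-up must supply the reference or a short argument before condition (2$'$) is fully established.
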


\begin{proof}
According to Definition \ref{def: digital homology sphere}, we must check two conditions. First we must check condition 1$^\prime$ of Definition \ref{def: digital homology sphere}, i.e., we need to show that the link of each vertex is simple digitally equivalent to a digital $2$-sphere. We claim that there are two link isomorphism types in $S^1_8\ast S^1_8$, which we call Type I and Type II. Type I links are of the form $\lk((A,y_j))$ and $\lk((x_i,B))$.  Indeed, let $v:=(A,y_j)$.  Then the vertex set of $\lk(v)$ is given by $S^1_8\times \{y_{j-1},y_j,y_{j+1}, B\}\cup \{(A,y_{j-1}),(A,y_{j+1})\}$, with subscripts reduced mod 8, so that $|\lk(v)|=32$.  We first claim that each of the points $(x_i,B)$ is a simple point of $\lk(v)$ and may furthermore be removed in any order.  To see this, observe that the link of $(x_i,B)$ in $\lk(v)$ is given by

$
\begin{tikzpicture}[scale=.6,
    decoration={markings,mark=at position 0.6 with {\arrow{triangle 60}}},
    ]

\node[inner sep=2pt, circle] (10) at (0,2) [draw] {};
\node[inner sep=2pt, circle] (20) at (4,1) [draw] {};
\node[inner sep=2pt, circle] (30) at (8,0) [draw] {};
\node[inner sep=2pt, circle] (11) at (4,4) [draw] {};
\node[inner sep=2pt, circle] (21) at (8,3) [draw] {};
\node[inner sep=2pt, circle] (31) at (12,2) [draw] {};
\node[inner sep=2pt, circle] (12) at (8,6) [draw] {};
\node[inner sep=2pt, circle] (22) at (12,5) [draw] {};
\node[inner sep=2pt, circle] (32) at (16,4) [draw] {};

\node[inner sep=2pt, circle] (i-1) at (2,11) [draw] {};
\node[inner sep=2pt, circle] (i+1) at (14,10) [draw] {};

\draw [-] (10) -- (11)node[midway, below] {$$};
\draw [-] (10) -- (20)node[midway, below] {$$};
\draw [-] (10) -- (21)node[midway, below] {$$};

\draw [-] (20) -- (21)node[midway, below] {$$};
\draw [-] (20) -- (30)node[midway, below] {$$};
\draw [-] (20) -- (31)node[midway, below] {$$};
\draw [-] (20) -- (11)node[midway, below] {$$};

\draw [-] (11) -- (12)node[midway, below] {$$};
\draw [-] (11) -- (22)node[midway, below] {$$};
\draw [-] (11) -- (21)node[midway, below] {$$};

\draw [-] (12) -- (22)node[midway, below] {$$};
\draw [-] (12) -- (21)node[midway, below] {$$};

\draw [-] (21) -- (22)node[midway, below] {$$};
\draw [-] (21) -- (30)node[midway, below] {$$};
\draw [-] (21) -- (31)node[midway, below] {$$};
\draw [-] (30) -- (31)node[midway, below] {$$};
\draw [-] (21) -- (32)node[midway, below] {$$};
\draw [-] (21) -- (31)node[midway, below] {$$};

\draw [-] (32) -- (22)node[midway, below] {$$};
\draw [-] (31) -- (22)node[midway, below] {$$};
\draw [-] (31) -- (32)node[midway, below] {$$};

\draw [white, line width=1.5mm] (11) -- (i-1) node[midway, left] {$$};
\draw [white, line width=1.5mm] (12) -- (i-1) node[midway, left] {$$};
\draw [white, line width=1.5mm] (10) -- (i-1) node[midway, left] {$$};
\draw [white, line width=1.5mm] (21) -- (i-1) node[midway, left] {$$};
\draw [white, line width=1.5mm] (22) -- (i-1) node[midway, left] {$$};
\draw [white, line width=1.5mm] (20) -- (i-1) node[midway, left] {$$};
\draw [-] (11) -- (i-1)node[midway, below] {$$};
\draw [-] (12) -- (i-1)node[midway, below] {$$};
\draw [-] (10) -- (i-1)node[midway, below] {$$};
\draw [-] (21) -- (i-1)node[midway, below] {$$};
\draw [-] (22) -- (i-1)node[midway, below] {$$};
\draw [-] (20) -- (i-1)node[midway, below] {$$};

\draw [white, line width=1.5mm] (20) -- (i+1) node[midway, left] {$$};
\draw [white, line width=1.5mm] (21) -- (i+1) node[midway, left] {$$};
\draw [white, line width=1.5mm] (22) -- (i+1) node[midway, left] {$$};
\draw [white, line width=1.5mm] (30) -- (i+1) node[midway, left] {$$};
\draw [white, line width=1.5mm] (31) -- (i+1) node[midway, left] {$$};
\draw [white, line width=1.5mm] (32) -- (i+1) node[midway, left] {$$};
\draw [-] (21) -- (i+1)node[midway, below] {$$};
\draw [-] (22) -- (i+1)node[midway, below] {$$};
\draw [-] (20) -- (i+1)node[midway, below] {$$};
\draw [-] (31) -- (i+1)node[midway, below] {$$};
\draw [-] (32) -- (i+1)node[midway, below] {$$};
\draw [-] (30) -- (i+1)node[midway, below] {$$};

\node[anchor = south west]  at (32) {\small{$(x_{i+1},y_{j+1})$}};
\node[anchor = north east]  at (10) {\small{$(x_{i-1},y_{j-1})$}};
\node[anchor = north east]  at (20) {\small{$(x_{i},y_{j-1})$}};
\node[anchor = north east]  at (30) {\small{$(x_{i+1},y_{j-1})$}};
\node[anchor = north west]  at (31) {\small{$(x_{i+1},y_{j})$}};
\node[anchor = south]  at (i-1) {\small{$(x_{i-1},B)$}};
\node[anchor = south]  at (i+1) {\small{$(x_{i+1},B)$}};
.
\end{tikzpicture}
$

It is clear that this is simple digitally contractible.  It is also clear that the resulting space remains simple digitally contractible with $(x_{i-1},B)$ or $(x_{i+1},B)$ removed.  Next, after removing all the $(x_i,B)$ from $\lk(v)$, we claim that each of the points in $X\times \{y_{j-1}\}$ and $X\times \{y_{j+1}\}$ is simple and removable in any order.  To see this, observe that for any $i=1,\ldots, 8$, the point $(A,y_{j-1})$ is adjacent to every element in $\lk(x_i,y_{j-1})$; that is, $\lk(x_i,y_{j-1})$ is a cone with apex $(A,y_{j-1})$.  By Proposition \ref{prop: cone contractible}, these links are simple digitally contractible, and they furthermore remain cones when any point of $X\times \{y_{j-1}\}$ is removed.  Thus every point of $X\times \{y_{j-1}\}$ is a simple point.  In a similar way, the link of every point in $X\times \{y_{j+1}\}$ is a cone with apex $(A,y_{j+1})$.

Thus after removing all of these simple points, we obtain

$$
\begin{tikzpicture}[scale=.6,
    decoration={markings,mark=at position 0.6 with {\arrow{triangle 60}}},
    ]

\node[inner sep=2pt, circle] (51) at (12,15.5) [draw] {};
\node[inner sep=2pt, circle] (71) at (2,14) [draw] {};
\node[inner sep=2pt, circle] (11) at (1,10) [draw] {};
\node[inner sep=2pt, circle] (31) at (13,11) [draw] {};
\node[inner sep=2pt, circle] (41) at (14,13.5) [draw] {};
\node[inner sep=2pt, circle] (81) at (0,12) [draw] {};
\node[inner sep=2pt, circle] (61) at (4,15.5) [draw] {};
\node[inner sep=2pt, circle] (21) at (10,10) [draw] {};

\node[inner sep=2pt, circle] (A0) at (7,7) [draw] {};
\node[inner sep=2pt, circle] (A1) at (7,17) [draw] {};


\draw [white, line width=1.5mm] (11) -- (A0) node[midway, left] {$$};
\draw [white, line width=1.5mm] (21) -- (A0) node[midway, left] {$$};
\draw [white, line width=1.5mm] (31) -- (A0) node[midway, left] {$$};
\draw [white, line width=1.5mm] (41) -- (A0) node[midway, left] {$$};
\draw [white, line width=1.5mm] (51) -- (A0) node[midway, left] {$$};
\draw [white, line width=1.5mm] (61) -- (A0) node[midway, left] {$$};
\draw [white, line width=1.5mm] (71) -- (A0) node[midway, left] {$$};
\draw [white, line width=1.5mm] (81) -- (A0) node[midway, left] {$$};

\draw [-] (51) -- (A0) node[midway, left] {$$};
\draw [-] (61) -- (A0)node[midway, below] {$$};
\draw [-] (21) -- (A0)node[midway, below] {$$};
\draw [-] (41) -- (A0)node[midway, below] {$$};
\draw [-] (71) -- (A0)node[midway, below] {$$};
\draw [-] (81) -- (A0)node[midway, below] {$$};

\draw [-] (51) -- (61) node[midway, left] {$$};

\draw [white, line width=1.5mm] (11) -- (A1) node[midway, left] {$$};
\draw [white, line width=1.5mm] (21) -- (A1) node[midway, left] {$$};
\draw [white, line width=1.5mm] (31) -- (A1) node[midway, left] {$$};
\draw [white, line width=1.5mm] (41) -- (A1) node[midway, left] {$$};
\draw [white, line width=1.5mm] (71) -- (A1) node[midway, left] {$$};
\draw [white, line width=1.5mm] (81) -- (A1) node[midway, left] {$$};

\draw [-] (11) -- (A1) node[midway, left] {$$};
\draw [-] (31) -- (A1)node[midway, below] {$$};
\draw [-] (41) -- (A1)node[midway, below] {$$};
\draw [-] (51) -- (A1) node[midway, left] {$$};
\draw [-] (61) -- (A1)node[midway, below] {$$};
\draw [-] (71) -- (A1)node[midway, below] {$$};
\draw [-] (81) -- (A1)node[midway, below] {$$};


\draw [-] (41) -- (51)node[midway, below] {$$};
\draw [-] (61) -- (71)node[midway, below] {$$};

\draw [white, line width=1.5mm] (21) -- (31) node[midway, left] {$$};
\draw [-] (31) -- (A0)node[midway, below] {$$};

\draw [-] (21) -- (31)node[midway, below] {$$};


%

\draw [-] (21) -- (A1)node[midway, below] {$$};

\draw [white, line width=1.5mm] (11) -- (21) node[midway, left] {$$};
\draw [-] (11) -- (21) node[midway, left] {$$};

\draw [-] (71) -- (81) node[midway, left] {$$};

\draw [-] (81) -- (11)node[midway, below] {$$};
\draw [-] (31) -- (41)node[midway, below] {$$};

\draw [-] (11) -- (A0) node[midway, left] {$$};

\node[anchor = south ]  at (A1) {\small{$(A,y_{j+1})$}};
\node[anchor = north ]  at (A0) {\small{$(A,y_{j-1})$}};

\node[anchor = east ]  at (11) {\small{$(x_1,y_j)$}};
\node[anchor = south ]  at (21) {\small{$(x_2,y_j)$}};
\node[anchor = west ]  at (31) {\small{$(x_3,y_j)$}};
\node[anchor = west ]  at (41) {\small{$(x_4,y_j)$}};
\node[anchor = west ]  at (51) {\small{$(x_5,y_j)$}};
\node[anchor = east ]  at (61) {\small{$(x_6,y_j)$}};
\node[anchor = east ]  at (71) {\small{$(x_7,y_j)$}};
\node[anchor = east ]  at (81) {\small{$(x_8,y_j)$}};

\node[inner sep=2pt, circle] (61') at (4,15.5) [draw] {};
\node[inner sep=2pt, circle] (21') at (10,10) [draw] {};
.
\end{tikzpicture}
$$
which is a digital $2$-sphere by Example \ref{exam: S2 with any equator}. The other case of a Type I link in which $v=(x_i,B)$ is similarly checked to be simple digitally equivalent to a $2$-sphere.  We omit the details.

Next we must check all Type II links i.e. links of the form $\lk(x_i,y_j)$. As a set, this is given by $\lk(x_i,y_j)=\{x_{i-1},x_i,x_{i+1}, A\}\times  \{y_{j-1},y_j,y_{j+1}, B\}-\{(x_i,y_i),(A,B)\}$ so that $|\lk(x_i,y_j)|=14$ and where we again reduce $i$ and $j$ modulo $8$. We observe that each of the points $(A,y_{j-1}),(A,y_{j+1}),(x_{i-1},B),$ and $(x_{i+1},B)$ are simple points and that their links are isomorphic.  The link of $(A,y_{j-1})$, for example, is given by

$$
\begin{tikzpicture}[scale=.75,
    decoration={markings,mark=at position 0.6 with {\arrow{triangle 60}}},
    ]

\node[inner sep=2pt, circle] (8) at (0,0) [draw] {};
\node[inner sep=2pt, circle] (2) at (0,4) [draw] {};
\node[inner sep=2pt, circle] (9) at (2,2) [draw] {};
\node[inner sep=2pt, circle] (11) at (5,2) [draw] {};
\node[inner sep=2pt, circle] (7) at (-2,2) [draw] {};
\node[inner sep=2pt, circle] (10) at (-5,2) [draw] {};

\draw [-] (8) -- (2)node[midway, below] {$$};
\draw [-] (8) -- (7)node[midway, below] {$$};
\draw [-] (8) -- (10)node[midway, below] {$$};
\draw [-] (8) -- (9)node[midway, below] {$$};
\draw [-] (8) -- (11)node[midway, below] {$$};
\draw [-] (10) -- (7)node[midway, below] {$$};
\draw [-] (2) -- (7)node[midway, below] {$$};
\draw [-] (10) -- (2)node[midway, below] {$$};
\draw [-] (9) -- (11)node[midway, below] {$$};
\draw [-] (9) -- (2)node[midway, below] {$$};
\draw [-] (2) -- (11)node[midway, below] {$$};

\node[anchor = south ]  at (2) {\small{$(A,y_{j-1})$}};
\node[anchor = east ]  at (10) {\small{$(x_{i-1},y_{j})$}};
\node[anchor = south ]  at (7) {\small{$(x_{i-1},y_{j-1})$}};
\node[anchor = south ]  at (9) {\small{$(x_{i+1},y_{j-1})$}};
\node[anchor = north ]  at (8) {\small{$(x_{i},y_{j-1})$}};
\node[anchor = west ]  at (11) {\small{$(x_{i+1},y_{j})$}};
.
\end{tikzpicture}
$$
Furthermore, none of the four points $(A,y_{j-1}),(A,y_{j+1}),(x_{i-1},B),$ and $(x_{i+1},B)$ are in the link of any of the other points so that the four points may be removed in any order.  Removal of these points from $\lk(x_i,y_j)$ then yields
$$
\begin{tikzpicture}[scale=.75,
    decoration={markings,mark=at position 0.6 with {\arrow{triangle 60}}},
    ]

\node[inner sep=2pt, circle] (1) at (-1.5,1) [draw] {};
\node[inner sep=2pt, circle] (2) at (4,1) [draw] {};
\node[inner sep=2pt, circle] (3) at (8,2) [draw] {};
\node[inner sep=2pt, circle] (4) at (8,4) [draw] {};
\node[inner sep=2pt, circle] (5) at (6,5.5) [draw] {};
\node[inner sep=2pt, circle] (6) at (2,6) [draw] {};
\node[inner sep=2pt, circle] (7) at (-2,5.5) [draw] {};
\node[inner sep=2pt, circle] (8) at (-3,3) [draw] {};

\node[inner sep=2pt, circle] (A) at (3,9) [draw] {};
\node[inner sep=2pt, circle] (B) at (3,-3) [draw] {};

\draw [-] (B) -- (1)node[midway, below] {$$};
\draw [-] (B) -- (2)node[midway, below] {$$};
\draw [-] (B) -- (5)node[midway, below] {$$};
\draw [-] (B) -- (6)node[midway, below] {$$};
\draw [-] (B) -- (7)node[midway, below] {$$};
\draw [-] (B) -- (8)node[midway, below] {$$};


\draw [white, line width=1.5mm] (2) -- (4)node[midway, below] {$$};
\draw [-] (B) -- (4)node[midway, below] {$$};
\draw [white, line width=1.5mm] (4) -- (6)node[midway, below] {$$};
\draw [white, line width=1.5mm] (8) -- (2)node[midway, below] {$$};
\draw [white, line width=1.5mm] (2) -- (3)node[midway, below] {$$};
\draw [white, line width=1.5mm] (4) -- (3)node[midway, below] {$$};
\draw [white, line width=1.5mm] (5) -- (4)node[midway, below] {$$};
\draw [white, line width=1.5mm] (6) -- (5)node[midway, below] {$$};
\draw [white, line width=1.5mm] (6) -- (7)node[midway, below] {$$};
\draw [white, line width=1.5mm] (8) -- (7)node[midway, below] {$$};
\draw [white, line width=1.5mm] (8) -- (6)node[midway, below] {$$};
\draw [white, line width=1.5mm] (1) -- (2) node[midway, left] {$$};
\draw [-] (1) -- (2)node[midway, below] {$$};
\draw [-] (1) -- (8)node[midway, below] {$$};
\draw [-] (8) -- (2)node[midway, below] {$$};
\draw [-] (2) -- (3)node[midway, below] {$$};
\draw [-] (4) -- (3)node[midway, below] {$$};
\draw [-] (2) -- (4)node[midway, below] {$$};
\draw [-] (4) -- (6)node[midway, below] {$$};
\draw [-] (6) -- (5)node[midway, below] {$$};
\draw [-] (6) -- (7)node[midway, below] {$$};

\draw [white, line width=1.5mm] (A) -- (2) node[midway, left] {$$};
\draw [white, line width=1.5mm] (A) -- (3) node[midway, left] {$$};
\draw [white, line width=1.5mm] (A) -- (4) node[midway, left] {$$};
\draw [white, line width=1.5mm] (A) -- (5) node[midway, left] {$$};
\draw [white, line width=1.5mm] (A) -- (6) node[midway, left] {$$};
\draw [white, line width=1.5mm] (A) -- (8) node[midway, left] {$$};

\draw [-] (A) -- (2)node[midway, below] {$$};
\draw [-] (A) -- (3)node[midway, below] {$$};
\draw [-] (A) -- (4)node[midway, below] {$$};
\draw [-] (A) -- (5)node[midway, below] {$$};
\draw [-] (8) -- (6)node[midway, below] {$$};
\draw [white, line width=1.5mm] (A) -- (1) node[midway, left] {$$};
\draw [-] (A) -- (8)node[midway, below] {$$};
\draw [-] (A) -- (1)node[midway, below] {$$};
\draw [-] (8) -- (7)node[midway, below] {$$};
\draw [-] (5) -- (4)node[midway, below] {$$};
\draw [-] (B) -- (3)node[midway, below] {$$};
\draw [-] (A) -- (6)node[midway, below] {$$};
\draw [-] (A) -- (7)node[midway, below] {$$};

\node[anchor = north east]  at (1) {\small{$(x_{i-1},y_{j-1})$}};
\node[anchor = north west ]  at (2) {\small{$(x_{i},y_{j-1})$}};
\node[anchor = west ]  at (3) {\small{$(x_{i+1},y_{j-1})$}};
\node[anchor = west ]  at (4) {\small{$(x_{i+1},y_{j})$}};
\node[anchor = west ]  at (5) {\small{$(x_{i+1},y_{j+1})$}};
\node[anchor = south ]  at (6) {\small{$(x_{i},y_{j+1})$}};
\node[anchor = east ]  at (7) {\small{$(x_{i-1},y_{j+1})$}};
\node[anchor = north east ]  at (8) {\small{$(x_{i-1},y_{j})$}};

\node[anchor = south ]  at (A) {\small{$(A,y_j)$}};
\node[anchor = north ]  at (B) {\small{$(x_{i},B)$}};
.
\end{tikzpicture}
$$

It is then easy to see that $(x_{i-1},y_{j-1}), (x_{i+1}, y_{j-1}), (x_{i+1},y_{j+1}), (x_{i-1},y_{j+1})$ are all simple points which may be removed, yielding a digital space which is simple digitally equivalent to a digital $2$-sphere.  Thus all links are simple digitally equivalent to a digital $2$-sphere, and condition 1$^\prime$ is satisfied.

Finally we check condition 2$^\prime$ of Definition \ref{def: digital homology sphere}; that is, we show that $\widetilde{H}_i(S^1_8\ast S^1_8)=\Z$ for $i=3$ and $0$ otherwise. This is easily verified through computer software such as SAGE.  The details of the construction of $\widetilde{H}_i(S^1_8\ast S^1_8)=\Z$ in Sage are outlined in Appendix \ref{sec: construction}. With the naming convention used in the Appendix, we see that in SAGE we have

\begin{lstlisting}
cJoin = Join.clique_complex()

cJoin.homology()

{0: 0, 1: 0, 2: 0, 3: Z, 4: 0, 5: 0}.
\end{lstlisting}
\end{proof}

\section{Digital Hopf bundle}\label{sec: Digital Hopf bundle}

We saw in Theorem \ref{thm no tolerance group} that there are no non-simple digitally contractible tolerance space groups.  In the topological setting, the Hopf construction does not require a topological group, as such, but only a weaker kind of structure. In the tolerance setting, however, a circle does not even admit this weaker kind of structure.

\begin{definition} Let $X$ be a tolerance space.  We say that \emph{$X$ admits a multiplication with two-sided unit} if there is a map $m\colon X \times X \to X$ and a point $x_0\in X$ (the two-sided unit element) that satisfies $m(x,x_0)=x=m(x_0,x)$ for all $x\in X$.
\end{definition}

Although by Theorem \ref{thm no tolerance group}, we know that in general digital groups are simple digitally contractible, this fact is very easily seen in the case of a digital circle, so we include the following result here.

\begin{proposition}\label{prop: no circle multiplication} Let $S$ be a digital circle in the sense of Definition \ref{def: digital sphere}. Then $S$ does not admit a multiplication with two-sided unit.
\end{proposition}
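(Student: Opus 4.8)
The plan is to argue by contradiction, exploiting the rigidity of continuous maps on digital circles. Suppose $S$ is a digital circle with vertices labeled cyclically $s_0, s_1, \ldots, s_{k-1}$ (with $s_i \sim s_{i+1}$, indices mod $k$), and suppose a multiplication $m\colon S\times S\to S$ with two-sided unit $x_0$ exists. After relabeling, I would take $x_0 = s_0$. The two-sided unit condition pins down an entire ``row'' and ``column'' of the multiplication table: $m(s_i, s_0) = s_i$ and $m(s_0, s_j) = s_j$ for all $i,j$. The strategy is to show that continuity of $m$ forces these prescribed boundary values to propagate inconsistently around the cycle, contradicting single-valuedness of $m$.

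First I would record the key continuity constraint in the form most useful here. Since adjacency in the product $S\times S$ means each coordinate is equal-or-adjacent, the neighbors of $(s_i, s_j)$ are exactly the points $(s_{i'}, s_{j'})$ with $i' \in \{i-1,i,i+1\}$ and $j'\in\{j-1,j,j+1\}$. Continuity of $m$ therefore says that as we move one step in either coordinate, the output $m$ can change by at most one step around the target circle $S$ (i.e.\ stay put or move to an adjacent vertex). The crucial observation is that along the row $j=0$ we have $m(s_i,s_0)=s_i$, so the output winds exactly once around $S$ as $i$ runs through a full cycle; the same holds along the column $i=0$.

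Next I would track how this ``winding'' behaves as we slide the fixed coordinate. Consider the sequence of maps $f_j \colon S \to S$ defined by $f_j(s_i) = m(s_i, s_j)$. Each $f_j$ is a continuous self-map of the digital circle, and $f_{j+1}$ differs from $f_j$ only in that the second coordinate moved one adjacency step, so $f_{j+1}(s_i)$ is equal-or-adjacent to $f_j(s_i)$ for every $i$. Starting from $f_0 = \mathrm{id}_S$, which has winding number $1$, I would argue that a continuous one-step homotopy through self-maps of a digital circle cannot change this winding number. But after going all the way around in the second coordinate we return to $f_0$, while simultaneously the column condition $m(s_0, s_j) = s_j$ forces the \emph{value} $f_j(s_0) = s_j$ to itself wind once around. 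The conflict is that a degree-one family of self-maps whose value at the basepoint also winds once forces the total output to wind around \emph{twice} along a single cycle of the diagonal, which is impossible for a single-valued continuous map into a $k$-cycle.

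The hard part will be making the informal ``winding number'' argument rigorous in the purely combinatorial digital setting, since we have no homology or fundamental group machinery developed at this point in the paper and must instead reason directly about step-by-step output changes in the cycle $S$. Concretely, the obstacle is to show cleanly that a continuous map $S\times S \to S$ cannot simultaneously restrict to the identity on both the first row and first column, and I expect the cleanest route is a small finite case-check: because the paper's constructions center on $k=4$ and $k=8$, one can verify directly that no assignment of output values consistent with the one-step continuity constraint and the prescribed boundary row/column exists, reducing the obstruction to a finite, if tedious, consistency check that I would summarize rather than exhaustively enumerate.
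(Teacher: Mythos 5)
There is a genuine gap here, and it sits exactly where you flagged it: the winding-number invariance you need is never established, and in fact it is \emph{false} for the smallest digital circle. On the $4$-cycle $s_0,s_1,s_2,s_3$, the map $g$ with $g(s_0)=s_1$, $g(s_1)=s_1$, $g(s_2)=s_1$, $g(s_3)=s_0$ is continuous, satisfies $g(s_i)$ equal-or-adjacent to $s_i$ for every $i$ (so it is ``one step'' from the identity in your sense), yet has degree $0$ while the identity has degree $1$. The usual lifting argument that forces nearby maps to have equal degree only goes through for cycles of length at least $5$; for length $4$ the pointwise difference $\pm 1$ can accumulate to a jump of $3\equiv -1$ in a lift, which is exactly what happens in this example. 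Since $S^1_4$ is a legitimate digital circle (and the one the paper actually uses as the base of the Hopf bundle), your argument breaks on a case you must cover. Beyond that, the final step --- that a degree-one family whose basepoint value also winds once forces the diagonal $i\mapsto m(s_i,s_i)$ to have degree $2$ --- is asserted rather than proved, and your fallback of a finite check for $k=4,8$ would not establish the proposition for arbitrary digital circles. So as written the proposal does not close.

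The paper's proof is a two-line local argument that avoids all of this. By Definition \ref{def: digital sphere}, the link of the unit $s_0$ is a digital $0$-sphere, i.e.\ two \emph{non-adjacent} points $a$ and $b$. In the product adjacency, $(a,s_0)\sim(s_0,b)$, so continuity of $m$ together with the unit condition gives $a=m(a,s_0)\sim m(s_0,b)=b$, contradicting non-adjacency of $a$ and $b$. The only global input is the definition of a digital sphere; no degree theory is needed. If you want to salvage a degree-theoretic proof, you would first have to restrict to circles of length at least $5$ (or prove a corrected invariance statement), and then still supply the missing ``degree of the diagonal is $2$'' lemma --- considerably more machinery than the local observation the paper uses.
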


\begin{proof} Suppose we have a continuous map $m\colon S\times S \to S$ and a point $s_0\in S$ that satisfies the definition.  Since $S$ satisfies Definition \ref{def: digital sphere}, the link of $s_0$ consists of two non-adjacent points $a$ and $b$. However, $(a,s_0)\sim_{S\times S} (x_0,b)$, hence the two-sided unit condition and continuity of $m$ give $a=m(a,s_0)\sim_{S}m(s_0,b)=b$, a contradiction.
\end{proof}

We work around this lack of a multiplication on the circle in the following way.  Our ``multiplication" takes in two points on $S^1_8$ and yields a point on $S^1_4$,  with $S^1_4$ given by

$$
\begin{tikzpicture}[scale=.6,
    decoration={markings,mark=at position 0.6 with {\arrow{triangle 60}}},
    ]

\node[inner sep=2pt, circle] (1) at (2,0) [draw] {};
\node[inner sep=2pt, circle] (i) at (0,2) [draw] {};
\node[inner sep=2pt, circle] (-1) at (-2,0) [draw] {};
\node[inner sep=2pt, circle] (-i) at (0,-2) [draw] {};

\draw [-] (1) -- (i) node[midway, left] {$$};
\draw [-] (1) -- (-i)node[midway, below] {$$};
\draw [-] (i) -- (-1)node[midway, below] {$$};
\draw [-] (-1) -- (-i)node[midway, below] {$$};

\node[anchor = west]  at (1) {\small{$1$}};
\node[anchor = south]  at (i) {\small{$i$}};
\node[anchor = east]  at (-1) {\small{$-1$}};
\node[anchor = north]  at (-i) {\small{$-i$}};.
\end{tikzpicture}
$$

We define $\mu\colon S^1_8\times S^1_8\to S^1_4$ as follows. View $S^1_8\times S^1_8$ as an identification space.  Then $\mu$ takes the same value on each diagonal or contour line according to the rules below:

$$
\begin{tikzpicture}[scale=1][thick]

    \node[inner sep=1pt, circle, fill=black] at (1,1) {};
    \node[inner sep=1pt, circle, fill=black] at (2,1) {};
    \node[inner sep=1pt, circle, fill=black] at (3,1) {};
    \node[inner sep=1pt, circle, fill=black] at (3,1) {};
    \node[inner sep=1pt, circle, fill=black] at (4,1) {};
    \node[inner sep=1pt, circle, fill=black] at (5,1) {};
    \node[inner sep=1pt, circle, fill=black] at (6,1) {};
    \node[inner sep=1pt, circle, fill=black] at (7,1) {};
    \node[inner sep=1pt, circle, fill=black] at (8,1) {};
    \node[inner sep=1pt, circle, fill=black] at (9,1) {};
    \node at (1,.5) {$1$};
    \node at (2,.5) {$2$};
    \node at (3,.5) {$3$};
    \node at (4,.5) {$4$};
    \node at (5,.5) {$5$};
    \node at (6,.5) {$6$};
    \node at (7,.5) {$7$};
    \node at (8,.5) {$8$};
    \node at (9,.5) {$1$};

    \node at (.5,1) {$1$};
    \node at (.5,2) {$2$};
    \node at (.5,3) {$3$};
    \node at (.5,4) {$4$};
    \node at (.5,5) {$5$};
    \node at (.5,6) {$6$};
    \node at (.5,7) {$7$};
    \node at (.5,8) {$8$};
    \node at (.5,9) {$1$};

    \node[inner sep=1pt, circle, fill=black] at (1,2) {};
    \node[inner sep=1pt, circle, fill=black] at (2,2) {};
    \node[inner sep=1pt, circle, fill=black] at (3,2) {};
    \node[inner sep=1pt, circle, fill=black] at (3,2) {};
    \node[inner sep=1pt, circle, fill=black] at (4,2) {};
    \node[inner sep=1pt, circle, fill=black] at (5,2) {};
    \node[inner sep=1pt, circle, fill=black] at (6,2) {};
    \node[inner sep=1pt, circle, fill=black] at (7,2) {};
    \node[inner sep=1pt, circle, fill=black] at (8,2) {};
    \node[inner sep=1pt, circle, fill=black] at (9,2) {};

    \node[inner sep=1pt, circle, fill=black] at (1,3) {};
    \node[inner sep=1pt, circle, fill=black] at (2,3) {};
    \node[inner sep=1pt, circle, fill=black] at (3,3) {};
    \node[inner sep=1pt, circle, fill=black] at (3,3) {};
    \node[inner sep=1pt, circle, fill=black] at (4,3) {};
    \node[inner sep=1pt, circle, fill=black] at (5,3) {};
    \node[inner sep=1pt, circle, fill=black] at (6,3) {};
    \node[inner sep=1pt, circle, fill=black] at (7,3) {};
    \node[inner sep=1pt, circle, fill=black] at (8,3) {};
    \node[inner sep=1pt, circle, fill=black] at (9,3) {};

    \node[inner sep=1pt, circle, fill=black] at (1,4) {};
    \node[inner sep=1pt, circle, fill=black] at (2,4) {};
    \node[inner sep=1pt, circle, fill=black] at (3,4) {};
    \node[inner sep=1pt, circle, fill=black] at (3,4) {};
    \node[inner sep=1pt, circle, fill=black] at (4,4) {};
    \node[inner sep=1pt, circle, fill=black] at (5,4) {};
    \node[inner sep=1pt, circle, fill=black] at (6,4) {};
    \node[inner sep=1pt, circle, fill=black] at (7,4) {};
    \node[inner sep=1pt, circle, fill=black] at (8,4) {};
    \node[inner sep=1pt, circle, fill=black] at (9,4) {};

    \node[inner sep=1pt, circle, fill=black] at (1,5) {};
    \node[inner sep=1pt, circle, fill=black] at (2,5) {};
    \node[inner sep=1pt, circle, fill=black] at (3,5) {};
    \node[inner sep=1pt, circle, fill=black] at (3,5) {};
    \node[inner sep=1pt, circle, fill=black] at (4,5) {};
    \node[inner sep=1pt, circle, fill=black] at (5,5) {};
    \node[inner sep=1pt, circle, fill=black] at (6,5) {};
    \node[inner sep=1pt, circle, fill=black] at (7,5) {};
    \node[inner sep=1pt, circle, fill=black] at (8,5) {};
    \node[inner sep=1pt, circle, fill=black] at (9,5) {};

    \node[inner sep=1pt, circle, fill=black] at (1,6) {};
    \node[inner sep=1pt, circle, fill=black] at (2,6) {};
    \node[inner sep=1pt, circle, fill=black] at (3,6) {};
    \node[inner sep=1pt, circle, fill=black] at (3,6) {};
    \node[inner sep=1pt, circle, fill=black] at (4,6) {};
    \node[inner sep=1pt, circle, fill=black] at (5,6) {};
    \node[inner sep=1pt, circle, fill=black] at (6,6) {};
    \node[inner sep=1pt, circle, fill=black] at (7,6) {};
    \node[inner sep=1pt, circle, fill=black] at (8,6) {};
    \node[inner sep=1pt, circle, fill=black] at (9,6) {};

    \node[inner sep=1pt, circle, fill=black] at (1,7) {};
    \node[inner sep=1pt, circle, fill=black] at (2,7) {};
    \node[inner sep=1pt, circle, fill=black] at (3,7) {};
    \node[inner sep=1pt, circle, fill=black] at (3,7) {};
    \node[inner sep=1pt, circle, fill=black] at (4,7) {};
    \node[inner sep=1pt, circle, fill=black] at (5,7) {};
    \node[inner sep=1pt, circle, fill=black] at (6,7) {};
    \node[inner sep=1pt, circle, fill=black] at (7,7) {};
    \node[inner sep=1pt, circle, fill=black] at (8,7) {};
    \node[inner sep=1pt, circle, fill=black] at (9,7) {};

    \node[inner sep=1pt, circle, fill=black] at (1,8) {};
    \node[inner sep=1pt, circle, fill=black] at (2,8) {};
    \node[inner sep=1pt, circle, fill=black] at (3,8) {};
    \node[inner sep=1pt, circle, fill=black] at (3,8) {};
    \node[inner sep=1pt, circle, fill=black] at (4,8) {};
    \node[inner sep=1pt, circle, fill=black] at (5,8) {};
    \node[inner sep=1pt, circle, fill=black] at (6,8) {};
    \node[inner sep=1pt, circle, fill=black] at (7,8) {};
    \node[inner sep=1pt, circle, fill=black] at (8,8) {};
    \node[inner sep=1pt, circle, fill=black] at (9,8) {};

    \node[inner sep=1pt, circle, fill=black] at (1,9) {};
    \node[inner sep=1pt, circle, fill=black] at (2,9) {};
    \node[inner sep=1pt, circle, fill=black] at (3,9) {};
    \node[inner sep=1pt, circle, fill=black] at (3,9) {};
    \node[inner sep=1pt, circle, fill=black] at (4,9) {};
    \node[inner sep=1pt, circle, fill=black] at (5,9) {};
    \node[inner sep=1pt, circle, fill=black] at (6,9) {};
    \node[inner sep=1pt, circle, fill=black] at (7,9) {};
    \node[inner sep=1pt, circle, fill=black] at (8,9) {};
    \node[inner sep=1pt, circle, fill=black] at (9,9) {};

    \draw[green, thick] (.5,1.5) -- (1.5,.5);
    \draw[green, thick] (.5,2.5) -- (2.5,.5);
    \draw[red, thick] (.5,3.5) -- (3.5,.5);
    \draw[red, thick] (.5,4.5) -- (4.5,.5);
    \draw[blue, thick] (.5,5.5) -- (5.5,.5);
    \draw[blue, thick] (.5,6.5) -- (6.5,.5);
    \draw[purple, thick] (.5,7.5) -- (7.5,.5);
    \draw[purple, thick] (.5,8.5) -- (8.5,.5);
    \draw[green, thick] (.5,9.5) -- (9.5,.5);
    \draw[green, thick] (1.5,9.5) -- (9.5,1.5);
    \draw[red, thick] (2.5,9.5) -- (9.5,2.5);
    \draw[red, thick] (3.5,9.5) -- (9.5,3.5);
    \draw[blue, thick] (4.5,9.5) -- (9.5,4.5);
    \draw[blue, thick] (5.5,9.5) -- (9.5,5.5);
    \draw[purple, thick] (6.5,9.5) -- (9.5,6.5);
    \draw[purple, thick] (7.5,9.5) -- (9.5,7.5);
    \draw[green, thick] (8.5,9.5) -- (9.5,8.5);

\node[purple] at (-1,8) {$\mu(i,j)=-i$};
\node[blue] at (-1,6) {$\mu(i,j)=-1$};
\node[red] at (-1,4) {$\mu(i,j)=i$};
\node[green] at (-1,2) {$\mu(i,j)=1$};

\end{tikzpicture}
$$
The reader can check that the fibers of the Hopf map wind around in the usual manner. Now the contour lines of $\mu$ come in pairs.  Hence points adjacent to a point in a given color are no more than $1$ adjacent color away.  For example, $\mu(3,4)=-1, \mu(6,2)=-i$, and $\mu(1,1)=1$. Thus $\mu$ is continuous.

\begin{definition}\label{def: digital mul} Let $\mu\colon S^1_8\times S^1_8\to S^1_4$ be defined as above.  The \emph{digital Hopf map} $H\mu\colon S^1_8\ast S^1_8\to S^2$ is given by
\begin{eqnarray*}
H\mu(x,B)&=&A\\
H\mu(A,y)&=&B\\
H\mu(x,y)&=&\mu(x,y).
\end{eqnarray*}
\end{definition}

To see that $H\mu$ is continuous, suppose that $(x,y)\sim (z,B)$ for $x\neq A$. If $y=B$, clearly $\mu(x,y)\sim\mu(s,B)$.   Otherwise, $\mu(x,y)\sim A=\mu(z,B)$ since $x\neq A$. A similar argument holds for $(x,y)\sim (A,z)$. Since $\mu$ was shown to be continuous above, $H\mu$ is continuous.

\begin{definition}\label{defn: digital fiber bundle} Let $E, B, F$ be tolerance spaces and $p\colon E \to B$ a surjective tolerance map.  We say that $(E,p,B,F)$ is a \emph{tolerance space fibre bundle} if
\begin{enumerate}
\item For every $b\in B, p^{-1}(b)\approx F$ are simple digitally equivalent.
\item For every $b\in B$, there exists a tolerance subspace $V_b\subseteq B$ with $b\in V_b$ such that $p^{-1}(V_b)\approx V_b\times F$.  In addition, there is a tolerance space map $\phi\colon p^{-1}(V_b)\to V_b \times F$ such that the following diagram commutes:
    $$
    \xymatrix{
    p^{-1}(V_b) \ar[rr]^{\phi}  \ar[rd]^{p} && V_b\times F \ar[ld]^{\pi_1}\\
    & V_b &
    }
    $$
\end{enumerate}
\end{definition}
Note that the map $\phi$ is not required to be an isomorphism.

\begin{theorem}\label{thm: digital Hopf fiber bundle} The map $H\mu\colon S^1_8\ast S^1_8\to S^1_4$ is a digital fiber bundle with fiber $F:=S^1_8$.
\end{theorem}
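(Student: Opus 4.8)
The plan is to verify the two clauses of Definition \ref{defn: digital fiber bundle} directly for the continuous surjection $p := H\mu\colon E \to S^2$, where $E = S^1_8 \ast S^1_8$, the base is the digital $2$-sphere $S^2 = SS^1_4$ (the target of $H\mu$ in Definition \ref{def: digital mul}), and $F = S^1_8$. Recall the join decomposition of $E$ into the ``top'' $S^1_8 \times \{B\}$, the ``middle'' $S^1_8 \times S^1_8$, and the ``bottom'' $\{A\} \times S^1_8$, with $p$ sending the top to the point $A$, the bottom to the point $B$, and $(x,y)$ in the middle to $\mu(x,y) \in S^1_4$.

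For clause (1) I would compute each fibre. Over the two poles, $p^{-1}(A) = S^1_8 \times \{B\}$ and $p^{-1}(B) = \{A\} \times S^1_8$ are each isomorphic to $F = S^1_8$, so trivially $p^{-1}(A) \approx F \approx p^{-1}(B)$. Over a point $c \in S^1_4$ the fibre is $\mu^{-1}(c)$, which by the contour-line description of $\mu$ is the union of two adjacent ``anti-diagonal'' $8$-cycles on the torus $S^1_8 \times S^1_8$; the staircase adjacencies between them make $\mu^{-1}(c)$ a triangulated annulus. I would then collapse this annulus onto one of its two boundary circles: each vertex $v$ of the other circle has link a tree (a path on at most four vertices, reducing to a single edge once its two neighbours on its own circle are gone), hence simple digitally contractible, so $v$ is a simple point and remains so as the others are removed. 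Deleting one whole boundary circle leaves $S^1_8$, so $\mu^{-1}(c) \approx F$. This mirrors the collapses carried out in Example \ref{exam: circle hom}.

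For clause (2) I would cover $S^2$ by the two closed stars $V_{+} = \st(A) = CS^1_4$ and $V_{-} = \st(B) = CS^1_4$, noting that every point of $S^2$ (each pole, and each equatorial point of $S^1_4$) lies in $V_{+}$ or $V_{-}$, and assigning each point one of these as its $V_b$. A direct computation gives $p^{-1}(V_{+}) = S^1_8 \times CS^1_8$ and $p^{-1}(V_{-}) = CS^1_8 \times S^1_8$, realized as the two product pieces $X \times CY$ and $CX \times Y$ of the join. The heart of the matter is the lemma that for any tolerance spaces $Z$ and $W$ one has $Z \times CW \approx Z$. Granting it, $p^{-1}(V_{\pm}) = S^1_8 \times CS^1_8 \approx S^1_8$ and $V_{\pm} \times F = CS^1_4 \times S^1_8 \approx S^1_8$, so $p^{-1}(V_{\pm}) \approx V_{\pm} \times F$. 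For the required map I would take $\phi(x,y') = (p(x,y'),\, x)$: its first coordinate lands in $V_{\pm} = CS^1_4$ (equal to $A$ on the apex slab, to $\mu(x,y')$ otherwise) and its second in $F = S^1_8$; it is continuous since $p$ and the projection to $x$ are; and $\pi_1 \circ \phi = p$ by construction.

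The main obstacle, and the only genuinely new ingredient, is the product-with-a-cone lemma. I would prove it by deleting the entire slab $Z \times W$ from $Z \times CW$ through simple-vertex removals, leaving $Z \times \{a\} \cong Z$, where $a$ is the apex of $CW$. The key observation is that for $w \in W$ the vertex $(z,a)$ is adjacent to every neighbour of $(z,w)$ in $Z \times CW$, since $a$ is adjacent to all of $W$ and $z \in \st_Z(z)$; hence $\lk_{Z \times CW}((z,w))$ is a cone with apex $(z,a)$ and is simple digitally contractible by Proposition \ref{prop: cone contractible}, so $(z,w)$ is a simple point. Because $(z,a)$ is never itself a slab vertex, and a cone with some non-apex vertices removed is still a cone, each $(z,w)$ stays simple no matter in which order the slab vertices are deleted. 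I expect the bookkeeping in this lemma, together with the explicit identification of $\mu^{-1}(c)$ as an annulus and its collapse, to be the most delicate part; the remaining steps are the routine link computations typical of this setting.
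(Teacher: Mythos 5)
Your proposal is correct, and its skeleton matches the paper's: both arguments check condition (1) by identifying the fibre over an equatorial point as a pair of anti-diagonal $8$-cycles in $S^1_8\times S^1_8$ forming a triangulated annulus that collapses onto one boundary circle, and both establish condition (2) over the two closed hemispheres $C_AS^1_4$ and $C_BS^1_4$, whose preimages are the two product pieces of the join. Where you genuinely diverge is in the key equivalence $p^{-1}(V_\pm)\approx V_\pm\times F$. The paper proves this as Lemma~\ref{lem: simple equivalence of product} by an explicit, picture-by-picture deletion of the vertices $(v_i,u_j)$ of $(C_BS^1_8)\times S^1_8$, exhibiting each successive link and checking contractibility by inspection. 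You instead isolate the general statement $Z\times CW\approx Z$ for arbitrary tolerance spaces, proved by noting that for a slab vertex $(z,w)$ the vertex $(z,a)$ is adjacent to every remaining neighbour of $(z,w)$, so $\lk((z,w))$ is a cone with apex $(z,a)$ and Proposition~\ref{prop: cone contractible} applies; since $(z,a)$ is never deleted, the slab vertices stay simple in any order. This is the same mechanism the paper already exploits for the Type I links in Theorem~\ref{thm: join of S1 is S3} (where $\lk(x_i,y_{j-1})$ is a cone with apex $(A,y_{j-1})$), and your lemma subsumes both that computation and Lemma~\ref{lem: simple equivalence of product}: it is shorter, order-independent and reusable, at the cost of not displaying the intermediate complexes. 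Your trivialization $\phi(x,y')=(p(x,y'),x)$ is also a cleaner choice than the paper's partially specified extension of the halving map $\overline{\phi}\colon S^1_8\to S^1_4$; it is manifestly continuous as a map into a product, satisfies $\pi_1\circ\phi=p$ by construction, and that is all Definition~\ref{defn: digital fiber bundle} demands, since $\phi$ need not be an isomorphism. The only step needing the care you already give it is the annulus collapse in condition (1): the link of a vertex on the circle being deleted is a path on four vertices whose endpoints are its two same-circle neighbours, so during the deletion it only ever shrinks to a shorter path containing the middle edge and remains simple digitally contractible.
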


\begin{proof}
We verify conditions (1) and (2) of Definition \ref{defn: digital fiber bundle}.  Let $b\in S^2$.  If $b= A, B$, then it is easily seen that $(H\mu)^{-1}(A)\approx(H\mu)^{-1}(B)\approx S^1_8$.  Otherwise, suppose $b\neq A,B$.  Then it is also easily seen that $(H\mu)^{-1}(b)$ is given by

$$
\begin{tikzpicture}[scale=.5,
    decoration={markings,mark=at position 0.6 with {\arrow{triangle 60}}},
    ]

\node[inner sep=2pt, circle] (1) at (4,3.5) [draw] {};
\node[inner sep=2pt, circle] (2) at (2,5) [draw] {};
\node[inner sep=2pt, circle] (3) at (0,5) [draw] {};
\node[inner sep=2pt, circle] (4) at (-2,3.5) [draw] {};
\node[inner sep=2pt, circle] (5) at (-2,1.5) [draw] {};
\node[inner sep=2pt, circle] (6) at (0,0) [draw] {};
\node[inner sep=2pt, circle] (7) at (2,0) [draw] {};
\node[inner sep=2pt, circle] (8) at (4,1.5) [draw] {};

\node[inner sep=2pt, circle] (1') at (6,3.5) [draw] {};
\node[inner sep=2pt, circle] (2') at (2,7) [draw] {};
\node[inner sep=2pt, circle] (3') at (0,7) [draw] {};
\node[inner sep=2pt, circle] (4') at (-4,3.5) [draw] {};
\node[inner sep=2pt, circle] (5') at (-4,1.5) [draw] {};
\node[inner sep=2pt, circle] (6') at (0,-2) [draw] {};
\node[inner sep=2pt, circle] (7') at (2,-2) [draw] {};
\node[inner sep=2pt, circle] (8') at (6,1.5) [draw] {};

\draw [-] (1) -- (2)node[midway, below] {$$};
\draw [-] (2) -- (3)node[midway, below] {$$};
\draw [-] (3) -- (4)node[midway, below] {$$};
\draw [-] (4) -- (5)node[midway, below] {$$};
\draw [-] (5) -- (6)node[midway, below] {$$};
\draw [-] (7) -- (6)node[midway, below] {$$};
\draw [-] (7) -- (8)node[midway, below] {$$};
\draw [-] (8) -- (1)node[midway, below] {$$};

\draw [-] (1') -- (2')node[midway, below] {$$};
\draw [-] (2') -- (3')node[midway, below] {$$};
\draw [-] (3') -- (4')node[midway, below] {$$};
\draw [-] (4') -- (5')node[midway, below] {$$};
\draw [-] (5') -- (6')node[midway, below] {$$};
\draw [-] (7') -- (6')node[midway, below] {$$};
\draw [-] (7') -- (8')node[midway, below] {$$};
\draw [-] (8') -- (1')node[midway, below] {$$};

\draw [-] (1) -- (1')node[midway, below] {$$};
\draw [-] (2) -- (2')node[midway, below] {$$};
\draw [-] (3) -- (3')node[midway, below] {$$};
\draw [-] (4) -- (4')node[midway, below] {$$};
\draw [-] (5) -- (5')node[midway, below] {$$};
\draw [-] (6) -- (6')node[midway, below] {$$};
\draw [-] (7) -- (7')node[midway, below] {$$};
\draw [-] (8) -- (8')node[midway, below] {$$};

\draw [-] (1) -- (2')node[midway, below] {$$};
\draw [-] (2) -- (3')node[midway, below] {$$};
\draw [-] (3) -- (4')node[midway, below] {$$};
\draw [-] (4) -- (5')node[midway, below] {$$};
\draw [-] (5) -- (6')node[midway, below] {$$};
\draw [-] (7) -- (6')node[midway, below] {$$};
\draw [-] (7) -- (8')node[midway, below] {$$};
\draw [-] (8) -- (1')node[midway, below] {$$};

\end{tikzpicture}
$$
which is clearly simple digitally equivalent to $S^1_8$.  To verify condition (2), let $U_1:=C_AS^1_4:=S^1_4*\{A\}$ and $U_2:=C_BS^1_4:=S^1_4*\{B\}$, the northern and southern hemisphere of $S^2$, respectively.  Then $H\mu^{-1}(U_1)=(C_BS^1_8)\times S^1_8$.  We thus need to show that $(C_BS^1_8)\times S^1_8\approx (C_AS^1_4)\times S^1_8$. This is shown explicitly in Lemma \ref{lem: simple equivalence of product} below. Thus, supposing $(C_BS^1_8)\times S^1_8\approx (C_AS^1_4)\times S^1_8$, we have
$$
H\mu^{-1}(U_1)=(C_BS^1_8)\times S^1_8\approx (C_AS^1_4)\times S^1_8=U_1\times F.
$$

To find the map $\phi\colon (H\mu)^{-1}(U_1)\to U_1\times F$, observe that there is a tolerance space map $\overline{\phi}\colon S^1_8\to S^1_4$ given by $\overline{\phi}(1)=\overline{\phi}(2)=1, \overline{\phi}(3)=\overline{\phi}(4)=i, \overline{\phi}(5)=\overline{\phi}(6)=-1,$ and $\overline{\phi}(7)=\overline{\phi}(8)=-i$.  This may be extended to a map $\phi\colon (C_BS^1_8)\times S^1_8\to (C_AS^1_4)\times S^1_8$ making the following diagram commute:
    $$
    \xymatrix{
    (C_BS^1_8)\times S^1_8 \ar[rr]^{\phi}  \ar[rd]^{H\mu} && (C_AS^1_4)\times S^1_8 \ar[ld]^{\pi_1}\\
    & (C_AS^1_4) &
    }
    $$
The exact same argument holds for $U_2$. Thus $H\mu$ is a digital fiber bundle.
\end{proof}

\begin{lemma}\label{lem: simple equivalence of product} There is a simple digital homotopy equivalence $(C_BS^1_8)\times S^1_8\approx (C_AS^1_4)\times S^1_8$.
\end{lemma}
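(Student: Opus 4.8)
The plan is to prove the stronger and cleaner statement that \emph{each} side is simple digitally equivalent to the fiber $S^1_8$ itself, and then to invoke transitivity of $\approx$. Concretely, I would show
\[
(C_BS^1_8)\times S^1_8 \;\approx\; S^1_8 \;\approx\; (C_AS^1_4)\times S^1_8,
\]
where the middle $S^1_8$ is identified with $\{B\}\times S^1_8$ on the left and with $\{A\}\times S^1_8$ on the right. The key observation is that $C_BS^1_8$ and $C_AS^1_4$ are cones, so each has an apex that is adjacent to every other vertex; I would exploit precisely this ``dominating apex'' to collapse the cone factor through the product. Note that this route does not actually need the equivalence $S^1_4\approx S^1_8$ of Example \ref{exam: circle hom}, since both products collapse directly onto $S^1_8$.

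The step I would isolate as a lemma is the following: if a tolerance space $K$ has a vertex $w$ adjacent to every other vertex (e.g.\ the apex of a cone), then $K\times H\approx \{w\}\times H\cong H$ for every tolerance space $H$. To prove it I would delete the product one ``vertical slice'' $\{x\}\times H$ at a time, ranging over all $x\neq w$, and within each slice delete the vertices $(x,y)$ one at a time in any order. The claim is that every such $(x,y)$ is a simple point of the current graph. Writing $\overline{N}_K(x)=\{x\}\cup\lk_K(x)$ for the closed neighborhood, the link of $(x,y)$ in the full product is $\big(\overline{N}_K(x)\times \overline{N}_H(y)\big)\setminus\{(x,y)\}$, intersected with the surviving vertices. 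Since $w$ is adjacent to all of $K$, the vertex $(w,y)$ is adjacent to every other vertex of the box $\overline{N}_K(x)\times \overline{N}_H(y)$; and because we only ever delete vertices whose first coordinate is $\neq w$, the apex row $\{w\}\times H$ is never touched, so $(w,y)$ survives and remains adjacent to all remaining vertices of the link. Hence each such link is a cone with apex $(w,y)$ in the sense of Definition \ref{def:conesuspjoin}, and is simple digitally contractible by Proposition \ref{prop: cone contractible}. Once all slices with $x\neq w$ are deleted, exactly $\{w\}\times H$ remains.

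Applying this lemma with $K=C_BS^1_8$, $w=B$, and then with $K=C_AS^1_4$, $w=A$, taking $H=S^1_8$ in both cases, yields $(C_BS^1_8)\times S^1_8\approx S^1_8$ and $(C_AS^1_4)\times S^1_8\approx S^1_8$, and transitivity of simple digital equivalence finishes the proof. The main obstacle is the bookkeeping inside the lemma: one must confirm that deleting earlier slices only removes \emph{non-apex} vertices from each subsequent link-box (so the box shrinks but retains its apex $(w,y)$), together with the observation that ``a cone with some non-apex vertices deleted is still a cone,'' which holds because the defining property---the apex being adjacent to everything that remains---is preserved under deletion of other vertices. I expect this uniform cone-apex argument to be the crux; once it is in place, none of the delicate case-by-case link analysis of the kind appearing in the proof of Theorem \ref{thm: join of S1 is S3} is required.
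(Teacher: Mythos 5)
Your proposal is correct, and its top-level structure --- collapse each product onto $S^1_8$ by deleting simple points, then invoke transitivity of $\approx$ --- is exactly the paper's. Where you genuinely diverge is in how the collapse is certified. The paper deletes the vertices $(v_i,u_j)$ slice by slice and, for each of the distinct link types that arise, draws the link explicitly and checks by inspection that it is simple digitally contractible; that computation is specific to $S^1_8$ and must then be repeated (``the same argument shows'') for the $C_AS^1_4$ factor. You instead isolate the structural reason all of those links are contractible: since the apex $w$ dominates the cone factor $K$, the vertex $(w,y)$ lies in, and is adjacent to every other vertex of, the link-box $\bigl(\overline{N}_K(x)\times\overline{N}_H(y)\bigr)\setminus\{(x,y)\}$ of any $(x,y)$ with $x\neq w$, and it survives every deletion because only non-apex rows are ever removed; hence each successive link is a cone and Proposition \ref{prop: cone contractible} applies uniformly. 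This is a genuine improvement: it replaces the picture-by-picture verification with a one-line criterion, handles $(C_BS^1_8)\times S^1_8$ and $(C_AS^1_4)\times S^1_8$ (indeed $K\times H$ for any cone $K$ and any tolerance space $H$) in one stroke, makes the order-independence of the deletions transparent, and, as you note, never needs $S^1_4\approx S^1_8$. The only points worth making explicit are that isomorphic tolerance spaces are trivially simple digitally equivalent (so $\{w\}\times H\cong H$ yields $\approx$) and that a cone on the empty set is a single point, so a degenerate link-box causes no trouble; both are immediate.
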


\begin{proof} We explicitly find a sequence of simple points to remove from $(C_BS^1_8)\times S^1_8$ to obtain $S^1_8$.  Let $v_1, v_2, \ldots, v_8$ denote the points of $S^1_8$ in $C_BS^1_8$ and $u_1, u_2, \ldots, u_8$ denote the points of $S^1_8$ in the second factor.  We claim that the removal of the points
\begin{eqnarray*}
(v_1,u_1), (v_1, u_2), &\ldots &, (v_1,u_8)\\
(v_2,u_1), (v_2, u_2), &\ldots &, (v_2,u_8)\\
\vdots &\ddots& \vdots \\
(v_8,u_1), (v_8, u_2), &\ldots &, (v_8,u_8)\\
\end{eqnarray*}
in this order constitutes a removal of simple points, where each point is simple in the space with all previous points removed. When we claim below that a point is simple, we mean that it is simple in the space with all previous points removed. We first observe that the link of $(v_1, u_1)$ is given by

$$
\begin{tikzpicture}[scale=.5,
    decoration={markings,mark=at position 0.6 with {\arrow{triangle 60}}},
    ]

\node[inner sep=2pt, circle] (88) at (0,0) [draw] {};
\node[inner sep=2pt, circle] (81) at (12,0) [draw] {};
\node[inner sep=2pt, circle] (82) at (24,0) [draw] {};
\node[inner sep=2pt, circle] (18) at (2.5,1.5) [draw] {};
\node[inner sep=2pt, circle] (12) at (26.5,1.5) [draw] {};
\node[inner sep=2pt, circle] (28) at (5,3) [draw] {};
\node[inner sep=2pt, circle] (21) at (17,3) [draw] {};
\node[inner sep=2pt, circle] (22) at (29,3) [draw] {};
\node[inner sep=2pt, circle] (B8) at (2.5,6) [draw] {};
\node[inner sep=2pt, circle] (B1) at (14.5,6) [draw] {};
\node[inner sep=2pt, circle] (B2) at (26.5,6) [draw] {};

\draw [-] (21) -- (B2)node[midway, below] {$$};
\draw [-] (21) -- (12)node[midway, below] {$$};

\draw [-] (18) -- (21)node[midway, below] {$$};

\draw [-] (28) -- (21)node[midway, below] {$$};

\draw [-] (81) -- (12)node[midway, below] {$$};
\draw [-] (81) -- (18)node[midway, below] {$$};
\draw [-] (18) -- (B8)node[midway, below] {$$};
\draw [-] (B8) -- (21)node[midway, below] {$$};

\draw [-] (82) -- (12)node[midway, below] {$$};
\draw [-] (88) -- (B8)node[midway, below] {$$};
\draw [-] (22) -- (12)node[midway, below] {$$};
\draw [-] (B2) -- (22)node[midway, below] {$$};

\draw [white, line width=1.5mm] (22) -- (B1) node[midway, left] {$$};
\draw [-] (21) -- (22)node[midway, below] {$$};
\draw [white, line width=1.5mm] (23.833333,2.5) -- (B1) node[midway, left] {$$};
\draw [white, line width=1.5mm] (8.1666,4) -- (B1) node[midway, left] {$$};
\draw [white, line width=1.5mm] (5.16666,2.5) -- (B1) node[midway, left] {$$};

\draw [white, line width=1.5mm] (B1) -- (82) node[midway, left] {$$};
\draw [white, line width=1.5mm] (88) -- (B1) node[midway, left] {$$};
\draw [white, line width=1.5mm] (B1) -- (81) node[midway, left] {$$};

\draw [-] (22) -- (B1)node[midway, below] {$$};
\draw [white, line width=1.5mm] (B2) -- (12) node[midway, left] {$$};
\draw [-] (12) -- (B1)node[midway, below] {$$};
\draw [-] (28) -- (B1)node[midway, below] {$$};
\draw [-] (18) -- (B1)node[midway, below] {$$};

\draw [-] (B1) -- (82)node[midway, below] {$$};
\draw [-] (88) -- (B1)node[midway, below] {$$};
\draw [white, line width=1.5mm] (4.0833,5) -- (10.4167,1) node[midway, left] {$$};
\draw [-] (B1) -- (81)node[midway, below] {$$};
\draw [-] (B8) -- (81)node[midway, below] {$$};
\draw [white, line width=1.5mm] (24.083,5) -- (14.417,1) node[midway, left] {$$};
\draw [-] (B2) -- (81)node[midway, below] {$$};

\draw [white, line width=1.5mm] (B2) -- (82) node[midway, left] {$$};
\draw [-] (B2) -- (12)node[midway, below] {$$};
\draw [-] (B2) -- (82)node[midway, below] {$$};
\draw [-] (18) -- (28)node[midway, below] {$$};
\draw [-] (28) -- (B8)node[midway, below] {$$};
\draw [-] (B8) -- (B1)node[midway, below] {$$};
\draw [-] (B1) -- (B2)node[midway, below] {$$};
\draw [-] (88) -- (81)node[midway, below] {$$};
\draw [-] (88) -- (18)node[midway, below] {$$};
\draw [-] (82) -- (81)node[midway, below] {$$};

\draw [-] (B1) -- (21)node[midway, below] {$$};

\end{tikzpicture}
$$
which is seen to be simple digitally contractible. After removing $(v_1,u_1)$, the link of $(v_1,u_2)$ is
$$
\begin{tikzpicture}[scale=.5,
    decoration={markings,mark=at position 0.6 with {\arrow{triangle 60}}},
    ]

\node[inner sep=2pt, circle] (88) at (0,0) [draw] {};
\node[inner sep=2pt, circle] (81) at (12,0) [draw] {};
\node[inner sep=2pt, circle] (82) at (24,0) [draw] {};
\node[inner sep=2pt, circle] (12) at (26.5,1.5) [draw] {};
\node[inner sep=2pt, circle] (28) at (5,3) [draw] {};
\node[inner sep=2pt, circle] (21) at (17,3) [draw] {};
\node[inner sep=2pt, circle] (22) at (29,3) [draw] {};
\node[inner sep=2pt, circle] (B8) at (2.5,6) [draw] {};
\node[inner sep=2pt, circle] (B1) at (14.5,6) [draw] {};
\node[inner sep=2pt, circle] (B2) at (26.5,6) [draw] {};

\draw [-] (21) -- (B2)node[midway, below] {$$};
\draw [-] (21) -- (12)node[midway, below] {$$};

\draw [-] (28) -- (21)node[midway, below] {$$};

\draw [-] (81) -- (12)node[midway, below] {$$};

\draw [-] (B8) -- (21)node[midway, below] {$$};

\draw [-] (82) -- (12)node[midway, below] {$$};
\draw [-] (88) -- (B8)node[midway, below] {$$};
\draw [-] (22) -- (12)node[midway, below] {$$};
\draw [-] (B2) -- (22)node[midway, below] {$$};

\draw [white, line width=1.5mm] (22) -- (B1) node[midway, left] {$$};
\draw [-] (21) -- (22)node[midway, below] {$$};
\draw [white, line width=1.5mm] (23.833333,2.5) -- (B1) node[midway, left] {$$};
\draw [white, line width=1.5mm] (8.1666,4) -- (B1) node[midway, left] {$$};

\draw [white, line width=1.5mm] (B1) -- (82) node[midway, left] {$$};
\draw [white, line width=1.5mm] (88) -- (B1) node[midway, left] {$$};
\draw [white, line width=1.5mm] (B1) -- (81) node[midway, left] {$$};

\draw [-] (22) -- (B1)node[midway, below] {$$};
\draw [white, line width=1.5mm] (B2) -- (12) node[midway, left] {$$};
\draw [-] (12) -- (B1)node[midway, below] {$$};
\draw [-] (28) -- (B1)node[midway, below] {$$};

\draw [-] (B1) -- (82)node[midway, below] {$$};
\draw [-] (88) -- (B1)node[midway, below] {$$};
\draw [white, line width=1.5mm] (4.0833,5) -- (10.4167,1) node[midway, left] {$$};
\draw [-] (B1) -- (81)node[midway, below] {$$};
\draw [-] (B8) -- (81)node[midway, below] {$$};
\draw [white, line width=1.5mm] (24.083,5) -- (14.417,1) node[midway, left] {$$};
\draw [-] (B2) -- (81)node[midway, below] {$$};

\draw [white, line width=1.5mm] (B2) -- (82) node[midway, left] {$$};
\draw [-] (B2) -- (12)node[midway, below] {$$};
\draw [-] (B2) -- (82)node[midway, below] {$$};
\draw [-] (28) -- (B8)node[midway, below] {$$};
\draw [-] (B8) -- (B1)node[midway, below] {$$};
\draw [-] (B1) -- (B2)node[midway, below] {$$};
\draw [-] (88) -- (81)node[midway, below] {$$};
\draw [-] (82) -- (81)node[midway, below] {$$};

\draw [-] (B1) -- (21)node[midway, below] {$$};

\end{tikzpicture}
$$
which is also seen to be simple digitally contractible. Hence remove the simple point $(v_1,u_2)$, and observe that the link of $(v_1, u_3), \ldots, (v_1,u_7)$ is isomoprphic to the link of $(v_1,u_2)$. After removal of these simple points, the link of $(v_1,u_8)$ is
$$
\begin{tikzpicture}[scale=.5,
    decoration={markings,mark=at position 0.6 with {\arrow{triangle 60}}},
    ]

\node[inner sep=2pt, circle] (88) at (0,0) [draw] {};
\node[inner sep=2pt, circle] (81) at (12,0) [draw] {};
\node[inner sep=2pt, circle] (82) at (24,0) [draw] {};
\node[inner sep=2pt, circle] (28) at (5,3) [draw] {};
\node[inner sep=2pt, circle] (21) at (17,3) [draw] {};
\node[inner sep=2pt, circle] (22) at (29,3) [draw] {};
\node[inner sep=2pt, circle] (B8) at (2.5,6) [draw] {};
\node[inner sep=2pt, circle] (B1) at (14.5,6) [draw] {};
\node[inner sep=2pt, circle] (B2) at (26.5,6) [draw] {};

\draw [-] (21) -- (B2)node[midway, below] {$$};

\draw [-] (28) -- (21)node[midway, below] {$$};

\draw [-] (B8) -- (21)node[midway, below] {$$};

\draw [-] (88) -- (B8)node[midway, below] {$$};
\draw [-] (B2) -- (22)node[midway, below] {$$};

\draw [white, line width=1.5mm] (22) -- (B1) node[midway, left] {$$};
\draw [-] (21) -- (22)node[midway, below] {$$};
\draw [white, line width=1.5mm] (8.1666,4) -- (B1) node[midway, left] {$$};

\draw [white, line width=1.5mm] (B1) -- (82) node[midway, left] {$$};
\draw [white, line width=1.5mm] (88) -- (B1) node[midway, left] {$$};
\draw [white, line width=1.5mm] (B1) -- (81) node[midway, left] {$$};

\draw [-] (22) -- (B1)node[midway, below] {$$};
\draw [-] (28) -- (B1)node[midway, below] {$$};

\draw [-] (B1) -- (82)node[midway, below] {$$};
\draw [-] (88) -- (B1)node[midway, below] {$$};
\draw [white, line width=1.5mm] (4.0833,5) -- (10.4167,1) node[midway, left] {$$};
\draw [-] (B1) -- (81)node[midway, below] {$$};
\draw [-] (B8) -- (81)node[midway, below] {$$};
\draw [white, line width=1.5mm] (24.083,5) -- (14.417,1) node[midway, left] {$$};
\draw [-] (B2) -- (81)node[midway, below] {$$};

\draw [white, line width=1.5mm] (B2) -- (82) node[midway, left] {$$};
\draw [-] (B2) -- (82)node[midway, below] {$$};
\draw [-] (28) -- (B8)node[midway, below] {$$};
\draw [-] (B8) -- (B1)node[midway, below] {$$};
\draw [-] (B1) -- (B2)node[midway, below] {$$};
\draw [-] (88) -- (81)node[midway, below] {$$};
\draw [-] (82) -- (81)node[midway, below] {$$};

\draw [-] (B1) -- (21)node[midway, below] {$$};

\end{tikzpicture}
$$
which is simple digitally contractible so we remove it. Now the link of $(v_2,u_1)$ is seen to be

$$
\begin{tikzpicture}[scale=.5,
    decoration={markings,mark=at position 0.6 with {\arrow{triangle 60}}},
    ]

\node[inner sep=2pt, circle] (18) at (2.5,1.5) [draw] {};
\node[inner sep=2pt, circle] (12) at (26.5,1.5) [draw] {};
\node[inner sep=2pt, circle] (28) at (5,3) [draw] {};
\node[inner sep=2pt, circle] (21) at (17,3) [draw] {};
\node[inner sep=2pt, circle] (22) at (29,3) [draw] {};
\node[inner sep=2pt, circle] (B8) at (2.5,6) [draw] {};
\node[inner sep=2pt, circle] (B1) at (14.5,6) [draw] {};
\node[inner sep=2pt, circle] (B2) at (26.5,6) [draw] {};

\draw [-] (21) -- (B2)node[midway, below] {$$};
\draw [-] (21) -- (12)node[midway, below] {$$};

\draw [-] (18) -- (21)node[midway, below] {$$};

\draw [-] (28) -- (21)node[midway, below] {$$};

\draw [-] (18) -- (B8)node[midway, below] {$$};
\draw [-] (B8) -- (21)node[midway, below] {$$};

\draw [-] (22) -- (12)node[midway, below] {$$};
\draw [-] (B2) -- (22)node[midway, below] {$$};

\draw [white, line width=1.5mm] (22) -- (B1) node[midway, left] {$$};
\draw [-] (21) -- (22)node[midway, below] {$$};
\draw [white, line width=1.5mm] (23.833333,2.5) -- (B1) node[midway, left] {$$};
\draw [white, line width=1.5mm] (8.1666,4) -- (B1) node[midway, left] {$$};
\draw [white, line width=1.5mm] (5.16666,2.5) -- (B1) node[midway, left] {$$};

\draw [-] (22) -- (B1)node[midway, below] {$$};
\draw [white, line width=1.5mm] (B2) -- (12) node[midway, left] {$$};
\draw [-] (12) -- (B1)node[midway, below] {$$};
\draw [-] (28) -- (B1)node[midway, below] {$$};
\draw [-] (18) -- (B1)node[midway, below] {$$};

\draw [-] (B2) -- (12)node[midway, below] {$$};
\draw [-] (18) -- (28)node[midway, below] {$$};
\draw [-] (28) -- (B8)node[midway, below] {$$};
\draw [-] (B8) -- (B1)node[midway, below] {$$};
\draw [-] (B1) -- (B2)node[midway, below] {$$};

\draw [-] (B1) -- (21)node[midway, below] {$$};

\end{tikzpicture}
$$
This is simple digitally contractible, and we remove it.  The link of $(v_2,u_2), \ldots (v_2,u_7)$ is given by
$$
\begin{tikzpicture}[scale=.5,
    decoration={markings,mark=at position 0.6 with {\arrow{triangle 60}}},
    ]

\node[inner sep=2pt, circle] (12) at (26.5,1.5) [draw] {};
\node[inner sep=2pt, circle] (28) at (5,3) [draw] {};
\node[inner sep=2pt, circle] (21) at (17,3) [draw] {};
\node[inner sep=2pt, circle] (22) at (29,3) [draw] {};
\node[inner sep=2pt, circle] (B8) at (2.5,6) [draw] {};
\node[inner sep=2pt, circle] (B1) at (14.5,6) [draw] {};
\node[inner sep=2pt, circle] (B2) at (26.5,6) [draw] {};

\draw [-] (21) -- (B2)node[midway, below] {$$};
\draw [-] (21) -- (12)node[midway, below] {$$};

\draw [-] (28) -- (21)node[midway, below] {$$};

\draw [-] (B8) -- (21)node[midway, below] {$$};

\draw [-] (22) -- (12)node[midway, below] {$$};
\draw [-] (B2) -- (22)node[midway, below] {$$};

\draw [white, line width=1.5mm] (22) -- (B1) node[midway, left] {$$};
\draw [-] (21) -- (22)node[midway, below] {$$};
\draw [white, line width=1.5mm] (23.833333,2.5) -- (B1) node[midway, left] {$$};
\draw [white, line width=1.5mm] (8.1666,4) -- (B1) node[midway, left] {$$};

\draw [-] (22) -- (B1)node[midway, below] {$$};
\draw [white, line width=1.5mm] (B2) -- (12) node[midway, left] {$$};
\draw [-] (12) -- (B1)node[midway, below] {$$};
\draw [-] (28) -- (B1)node[midway, below] {$$};

\draw [-] (B2) -- (12)node[midway, below] {$$};
\draw [-] (28) -- (B8)node[midway, below] {$$};
\draw [-] (B8) -- (B1)node[midway, below] {$$};
\draw [-] (B1) -- (B2)node[midway, below] {$$};

\draw [-] (B1) -- (21)node[midway, below] {$$};

\end{tikzpicture}
$$
and after removing these points, the link of $(v_2,u_7)$ is
$$
\begin{tikzpicture}[scale=.5,
    decoration={markings,mark=at position 0.6 with {\arrow{triangle 60}}},
    ]

\node[inner sep=2pt, circle] (28) at (5,3) [draw] {};
\node[inner sep=2pt, circle] (21) at (17,3) [draw] {};
\node[inner sep=2pt, circle] (22) at (29,3) [draw] {};
\node[inner sep=2pt, circle] (B8) at (2.5,6) [draw] {};
\node[inner sep=2pt, circle] (B1) at (14.5,6) [draw] {};
\node[inner sep=2pt, circle] (B2) at (26.5,6) [draw] {};

\draw [-] (21) -- (B2)node[midway, below] {$$};

\draw [-] (28) -- (21)node[midway, below] {$$};

\draw [-] (B8) -- (21)node[midway, below] {$$};

\draw [-] (B2) -- (22)node[midway, below] {$$};

\draw [white, line width=1.5mm] (22) -- (B1) node[midway, left] {$$};
\draw [-] (21) -- (22)node[midway, below] {$$};
\draw [white, line width=1.5mm] (8.1666,4) -- (B1) node[midway, left] {$$};

\draw [-] (22) -- (B1)node[midway, below] {$$};
\draw [-] (28) -- (B1)node[midway, below] {$$};

\draw [-] (28) -- (B8)node[midway, below] {$$};
\draw [-] (B8) -- (B1)node[midway, below] {$$};
\draw [-] (B1) -- (B2)node[midway, below] {$$};

\draw [-] (B1) -- (21)node[midway, below] {$$};

\end{tikzpicture}
$$
Now observe that the links of $(v_3,u_i),(v_4, u_i), \ldots, (v_7,u_i)$ follow the same pattern as the three links of $v_2$ above. After removing all these points we are left with $S^1_8\times K_2$ which is clearly simple homotopy equivalent to $S^1_8$.  The same argument shows that $(C_AS^1_4)\times S^1_8\approx S^1_8$.  Thus $(C_BS^1_8)\times S^1_8\approx (C_AS^1_4)\times S^1_8$.
\end{proof}

\appendix
\section{Construction of $S_8^1\ast S_8^1$}\label{sec: construction}
We share the code along with comments for constructing $S_8^1\ast S_8^1$ in Sage. Write $X = S^1_8=\{x1, \ldots, x8\}$ and  $Y = S^1_8=\{y1, \ldots, y8\}$
with apex in $A\in CX$ and apex $B\in CY$. Input these values in SAGE as numeric values

\begin{lstlisting}
A = 0
x1 = 1
x2 = 2
x3 = 3
x4 = 4
x5 = 5
x6 = 6
x7 = 7
x8 = 8
y1 = 9
y2 = 10
y3 = 11
y4 = 12
y5 = 13
y6 = 14
y7 = 15
y8 = 16
B = 17
\end{lstlisting}

Next we give the adjacency relations among the values in $X$ and $Y$ by specifying the neighbors of each point.

\begin{lstlisting}
Nx1 = [x8, x2]
Nx2 = [x1, x3]
Nx3 = [x2, x4]
Nx4 = [x3, x5]
Nx5 = [x4, x6]
Nx6 = [x5, x7]
Nx7 = [x6, x8]
Nx8 = [x7, x1]
Ny1 = [y8, y2]
Ny2 = [y1, y3]
Ny3 = [y2, y4]
Ny4 = [y3, y5]
Ny5 = [y4, y6]
Ny6 = [y5, y7]
Ny7 = [y6, y8]
Ny8 = [y7, y1]
\end{lstlisting}

We now define the digital images $X$ and $Y$ into SAGE.

\begin{lstlisting}
X = Graph({x1:Nx1, x2:Nx2, x3:Nx3, x4:Nx4, x5:Nx5, x6:Nx6, x7:Nx7,
x8:Nx8})
Y = Graph({y1:Ny1, y2:Ny2, y3:Ny3, y4:Ny4, y5:Ny5, y6:Ny6, y7:Ny7,
y8:Ny8})
\end{lstlisting}

From here we generate $CX$ and $CY$ with, of course, the proper adjacency relations.

\begin{lstlisting}
CX = Graph({x1:Nx1, x2:Nx2, x3:Nx3, x4:Nx4, x5:Nx5, x6:Nx6, x7:Nx7,
x8:Nx8})
for i in range(x1,x8+1):
    CX.add_edge(A, i)
CY = Graph({y1:Ny1, y2:Ny2, y3:Ny3, y4:Ny4, y5:Ny5, y6:Ny6, y7:Ny7,
y8:Ny8})
for i in range(y1,y8+1):
    CY.add_edge(B, i)
\end{lstlisting}

Next we construct the products $CX\times Y$ and $X\times CY$.

\begin{lstlisting}
Bottom = CX.strong_product(Y)
Top = X.strong_product(CY)
\end{lstlisting}

Finally, the join $S^1_8\ast S^1_8$ is the union of $CX \times Y$ and $X x\times CY$.

\begin{lstlisting}
Join = Bottom.union(Top)
\end{lstlisting}

\bibliographystyle{amsplain}
\bibliography{Hopf}

\providecommand{\bysame}{\leavevmode\hbox to3em{\hrulefill}\thinspace}
\providecommand{\MR}{\relax\ifhmode\unskip\space\fi MR }
\providecommand{\MRhref}[2]{%
  \href{http://www.ams.org/mathscinet-getitem?mr=#1}{#2}
}
\providecommand{\href}[2]{#2}
\begin{thebibliography}{10}

\bibitem{Boxer2005}
L.~Boxer, \emph{Properties of digital homotopy}, J. Math. Imaging Vision
  \textbf{22} (2005), no.~1, 19--26.

\bibitem{BoxerKaraca2008}
L.~Boxer and I.~Karaca, \emph{The classification of digital covering spaces},
  J. Math. Imaging Vision \textbf{32} (2008), no.~1, 23--29.

\bibitem{BoxerKaraca2010}
\bysame, \emph{Some properties of digital covering spaces}, J. Math. Imaging
  Vision \textbf{37} (2010), no.~1, 17--26.

\bibitem{EgeKaraca2017}
O.~Ege and I.~Karaca, \emph{Digital fibrations}, Proc. Nat. Acad. Sci. India
  Sect. A \textbf{87} (2017), no.~1, 109--114.

\bibitem{Evako2015}
A.~V. Evako, \emph{Classification of digital {$n$}-manifolds}, Discrete Appl.
  Math. \textbf{181} (2015), 289--296.

\bibitem{Ivan1991}
A.~V. Ivashchenko, \emph{Contractible transformations do not chage the homology
  groups of graphs}, Discrete Mathematics \textbf{126} (1994), 159--170.

\bibitem{Evako94}
\bysame, \emph{Some properties of contractible transformations on graphs},
  Discrete Math. \textbf{133} (1994), no.~1-3, 139--145.

\bibitem{LOS19a}
G.~Lupton, J.~Oprea, and N.~A. Scoville, \emph{Homotopy theory in digital
  topology}, arXiv:1905.07783 [math.AT], 2019.

\bibitem{Matveev2007}
S.~Matveev, \emph{Algorithmic topology and classification of 3-manifolds},
  second ed., Algorithms and Computation in Mathematics, vol.~9, Springer,
  Berlin, 2007.

\bibitem{MuirWarner1980}
A.~Muir and M.~W. Warner, \emph{Homogeneous tolerance spaces}, Czechoslovak
  Math. J. \textbf{30(105)} (1980), no.~1, 118--126.

\bibitem{Peters12}
J.~F. Peters and P.~Wasilewski, \emph{Tolerance spaces: origins, theoretical
  aspects and applications}, Inform. Sci. \textbf{195} (2012), 211--225.

\bibitem{Poston}
T.~Poston, \emph{Fuzzy geometry}, 1972, Thesis (Ph.D.)--University of Warwick.

\bibitem{Sossinsky1986}
A.~B. Sossinsky, \emph{Tolerance space theory and some applications}, Acta
  Appl. Math. \textbf{5} (1986), no.~2, 137--167.

\bibitem{Whitehead1950}
J.~H.~C. Whitehead, \emph{Simple homotopy types}, Amer. J. Math. \textbf{72}
  (1950), 1--57.

\bibitem{Zeeman}
E.C. Zeeman, \emph{The topology of the brain and visual perception}, Topology
  of 3-manifolds and related topics (Proc. The Univ. of Georgia Institute,
  1961) \textbf{Prentice-Hall} (1962), 240--256.

\end{thebibliography}

\end{document}